\newtheorem{lemma}{Lemma}
\newtheorem{assumption}{Assumption}
\newtheorem{theorem}{Theorem}
\newtheorem{proof*}{Proof*}
\newtheorem{definition}{Definition}
\newtheorem{proposition}{Proposition}
\begin{document}

\baselineskip=17pt

\title[]
{Zero and Non-zero Sum Risk-sensitive Semi-Markov Games}
\thanks{}
\author[]{Arnab Bhabak}
\address{Department of Mathematics\\
Indian Institute of Technology Guwahati\\
Guwahati, Assam, India}
\email{bhabak@iitg.ac.in}

\author[]{Subhamay Saha}
\address{Department of Mathematics\\
Indian Institute of Technology Guwahati\\
Guwahati, Assam, India}
\email{saha.subhamay@iitg.ac.in}

%---------------------------------------------------------------------------

\date{}

\begin{abstract} In this article we consider zero and non-zero sum risk-sensitive average criterion games for semi-Markov processes with a finite state space. For the zero-sum case, under suitable assumptions we show that the game has a value. We also establish the existence of a stationary saddle point equilibrium. For the non-zero sum case, under suitable assumptions we establish the existence of a stationary Nash equilibrium.
\vspace{2mm}

\noindent
{\bf 2010 Mathematics Subject Classification:} 90C40; 91A15;

\vspace{2mm}

\noindent
{\bf Keywords:} semi-Markov processes; zero-sum game; non-zero sum game; saddle point equilibrium; Nash equilibrium.

\end{abstract}

\maketitle

\section{Introduction} In this paper we consider both zero and non-zero sum risk-sensitive average criterion games for semi-Markov processes. The state space is assumed to be finite and action spaces are Borel. We also assume that the sojourn times are supported on a fixed compact interval. Under general continuity-compactness assumptions and an additional assumption of irreducibility, we show that the zero-sum game admits a value. We also prescribe a saddle point equilibrium which is given by minimizing and maximizing selectors of a pair of optimality equations. For the non-zero sum game problem, under certain additional assumptions we show the existence of a  Nash equilibrium. In the non-zero sum case the main step involves showing the existence of solution of a coupled system of equations. In the analysis of both the zero-sum and non-zero sum games, risk sensitive games for discrete-time Markov chains serve as an important intermediate step.

                                Stochastic games have found applications in diverse areas like economics, computer networks, evolutionary biology and so on. Generally in stochastic control problems, of which stochastic games are a sub-class, the expectation is minimized or maximized. But the obvious practical issue with this approach is that it does not take into account the controller's attitude towards risk. This motivates the study of risk-sensitive control problems where the expectation of the exponential of the random quantity is considered. Since the pioneering work of Howard and Matheson \cite{Matheson72}, there has been a lot of work on risk-sensitive control of both discrete and continuous time stochastic processes. Risk sensitive games for discrete time Markov chains has been studied by several authors, see for instance \cite{Ghosh14, Rieder17, Daniel19} for zero-sum games and \cite{Ghosh18, Chen19} for non-zero sum games. Risk-sensitive games for continuous-time diffusions has been studied in \cite{Saha20, Pradhan21, Pradhan20}. Similarly, risk-sensitive games for continuous-time Markov chains has been studied in \cite{Pal16, Wei18, Wei19}. The literature on risk-sensitive control of semi-Markov processes is very limited. In \cite{Lian18}, the authors study risk-sensitive control problem for semi-Markov processes on the finite horizon. Risk-sensitive infinite horizon discounted cost problem is considered in \cite{Saha21}. In \cite{Cadena16}, the authors consider the risk-sensitive average cost criterion for semi-Markov processes. But to the best of our knowledge, the present paper is the first work on risk-sensitive semi-Markov games.
 
 The rest of the paper is organised as follows. In section 2, we describe the zero-sum game problem under consideration. In section 3 we introduce the optimality equations and establish its solution. In section 4, we describe the non-zero sum game problem. Section 5 establishes the existence of Nash equilibrium for the non-zero sum game. In section 6 we make some concluding comments.
\section{Zero-Sum Game Model}

The risk-sensitive zero-sum semi-Markov game model that we consider here is given by
\begin{align}\label{Game model}
(S,A,B,\{A(i)\subset A, B(i)\subset B, i\in S\}, C(i,a,b),\{\rho_{(i,a,b)}(\cdot)\}, \{F_{i,a,b}\},[p_{i,j}(a,b)]),
\end{align}
where, \begin{itemize}
	\item $S$ is the state space, which is assumed to be finite and is endowed with the discrete topology.
	\item The Borel spaces $A$ and $B$ are the action sets for player 1 and 2 respectively. And for each $i\in S$, $A(i)\subset A$, $B(i)\subset B$ are Borel subsets denoting the set of all admissible actions in state $i$ for player 1 and 2 respectively.
	\item Define $\mathbb{K}=\{(i,a,b): i\in S, a\in A(i), b\in B(i)\}$ to be the set of admissible state-action pairs. Then $C:\mathbb{K}\to \mathbb{R}$  is the immediate cost function for player 1 and immediate reward for player 2. 
	\item For each $(i,a,b)\in \mathbb{K}$, the mapping $\rho_{(i,a,b)}:[0,\infty)\rightarrow \mathbb{R}$ denotes the running cost function for player 1 and running reward function for player 2. 
	\item $F_{i,a,b}$ is the sojourn time distribution function for both the players in state $i$  under the actions $a$ and $b$. It is assumed that the sojourn times are positive, so that 
	\begin{align}\label{no immediate jump}
	F_{i,a,b}(0)=0,   \hspace{.4cm} (i,a,b)\in \mathbb{K}
	\end{align}
	\item Finally, [$p_{i,j}(a,b)$] is the controlled transition law and satisfies $\sum_{j\in S}p_{i,j}(a,b)=1$ for every $(i,a,b)\in \mathbb{K}$.
\end{itemize}The game evolves in the following manner. At the initial time $t=0$, the process starts at $X_0=i_0\in S$. Suppose player 1 chooses an action $A_0=a_0\in A(i_0)$ and player 2 independently chooses an action $B_0=b_0\in B(i_0)$. As a result player 2 gets an immediate reward $C(i_0,a_0,b_0)$ from player 1. Player 1 also incurs a holding cost at the rate $\rho_{(i_0,a_0,b_0)}$. The process stays in state $i$ for a random amount of time $S_0$ whose distribution function is given by $F_{i_0,a_0,b_0}$ and then jumps to a new state $X_1=i_1$ with probability $p_{i_0,i_1}(a_0,b_0)$. Immediately after the first transition, players 1 and 2 chooses actions $A_1=a_1\in A(i_1)$ and $B_1=b_1\in B(i_1)$. The same sequence of events as described above repeats itself. Let $T_{n}$ to be the time when the nth transition is completed, then 
\begin{align}\label{jump time}
T_{0}=0   \hspace{.5cm}   and   \hspace{.5cm}       T_{n}=\sum_{i=0}^{n-1}S_{i} \hspace{.5cm} n=1,2,... ,
\end{align} 
where $S_{n}, n=0,1,2,\ldots$ denotes the random sojourn times at the nth state. We denote the number of transitions $N_{t}$ in the interval $[0,t]$ by
\begin{align}\label{first hitting time}
N_{t}=\sup\{n\in \mathbb{N}: T_{n}\leq t\},  \hspace{.2cm} t\geq0.
\end{align}
Let $\mathcal{H}_{n}$ be the information available upto time $T_{n}$, i.e., $\mathcal{H}_{0}=X_0$ and  for $n\geq 1$, $\mathcal{H}_{n}=\{X_0,A_0,B_0,S_0,\ldots,X_{n-1}, A_{n-1},B_{n-1},S_{n-1},X_n\}$, where for $n\geq 0$, $X_n$ is the nth state, $A_n $ and $B_n$ are the actions of player 1 and 2 respectively at the nth transition time and $S_n$ is the sojourn time at the nth state. For $n\geq 0$, we also define the admissible history spaces $H_n$ by $H_0=S$ and $H_n=\mathbb{K}\times (0,\infty)\times H_{n-1}$ for $n=1,2,\ldots$. We endow these spaces with the Borel sigma-algebra. 
Now we introduce the concept of policies.
\begin{definition}
	A randomized history dependent policy or simply a policy for player 1 is a sequence $\pi^{1} =\{\pi^{1}_{n}\ : n\geq 0\}$ of stochastic kernels $\pi^{1}_{n}$ on A given $H_{n}$ such that $$\pi^{1}_{n}(A(i_{n})\vert h_{n}) = 1\quad\forall\,\,h_{n}\in H_{n}, n=0,1,....$$
	A randomized history dependent policy for player 2 can be defined analogously.
\end{definition}

Let $\Phi^{1}$ be the set of all stochastic kernels $\phi^1$ on $A$ given $S$ satisfying $\phi^{1}(A(i)\vert i)=1$. A policy $\pi^{1}$ for player 1 is said to be stationary if there exists a stochastic kernel $\phi^{1}\in \Phi^{1}$ such that $\pi_{n}^{1}(.\vert h_{n})=\phi^{1}(.\vert i_{n})$ for all $h_{n}=(i_0,a_0,b_0,s_0,\ldots,i_{n-1},a_{n-1},b_{n-1},s_{n-1},i_n)\in H_{n}$ and $n=0,1,...$. We will identify a stationary policy $\pi^1$ with $\phi^1$. Similarly stationary policies for player 2 can be defined.

For each $m=1,2$, $\Pi_{m}$ and $\Phi^{m}$ represent the set of all randomized history dependent strategies and the set of all stationary strategies for player $m$, respectively. We will have the following assumptions on our model.
\begin{assumption}$\mbox{}$
	\begin{itemize}
		\item[(i)] For each $i\in S$, the set $A(i)$ and $B(i)$ are compact subsets of $A$ and $B$.
		\item [(ii)] For each $i,j\in S$, $(a,b)\rightarrow C(i,a,b)$ and $(a,b)\rightarrow p_{ij}(a,b)$ are continuous in $(a,b)\in A(i)\times B(i)$.
		\item[(iii)] The family $\{F_{i,a,b}\}$ is supported on a compact interval and is weakly continuous, that is, there exists $B>0$ such that
		 \begin{align}\label{compact support of sojourn time}
		F_{i,a,b}(B)=1,   \hspace{.3cm} (i,a,b)\in \mathbb{K},
		\end{align}
		and for each $i\in S$ and $u$ bounded measurable, $(a,b)\rightarrow \int_{0}^{B}u(s)dF_{i,a,b}(s)$ is continuous in $(a,b)\in A(i)\times B(i)$.
		\item[(iv)] For every $i\in S$, the mapping $(a,b,s)\rightarrow \rho_{(i,a,b)}(s)$ is continuous in $(a,b,s)\in A(i)\times B(i)\times [0,B]$. 
		\end{itemize}
		\end{assumption}
Since the spaces $A(i)$ and $B(i)$ are compact and the state space is finite, so it follows by Assumption 1 that,
\begin{align}\label{finiteness of the cost rate}
M_{\rho}:=\sup_{(i,a,b)\in \mathbb{K}, s\in [0,B]}\vert \rho_{(i,a,b)}(s)\vert < \infty.
\end{align}

Given the initial state $X_{0}=i$ and a pair of policies $(\pi^{1}, \pi^{2})$ , the distribution of $\{(X_{n},A_{n},B_{n},\\S_{n})\}$ is uniquely determined by the Tulcea theorem \cite{Marcus93}. We denote such a distribution by $\mathbb{P}^{\pi^{1},\pi^{2}}_{i}$, and $\mathbb{E}^{\pi^{1},\pi^{2}}_{i}$ be the corresponding expectation operator. The following Markov relations are satisfied almost surely under each distribution $\mathbb{P}^{\pi^{1},\pi^{2}}_{i}$: For each $i,j\in S$, $C$ Borel subset of $A$, $D$ Borel subset of $B$ and $n\in \mathbb{N}$,
\begin{align}\label{markov equalities}
&\mathbb{P}^{\pi^{1},\pi^{2}}_{i}[X_{0}=i]=1, \nonumber\\
&\mathbb{P}^{\pi^{1},\pi^{2}}_{i}[A_{n}\in C, B_{n}\in D \vert \mathcal{H}_{n}]= \pi_{n}^{1}(C\vert \mathcal{H}_{n})\pi^{2}_{n}(D\vert \mathcal{H}_{n}), \nonumber\\
&\mathbb{P}^{\pi^{1},\pi^{2}}_{i}[S_{n}\leq t\vert \mathcal{H}_{n},A_{n},B_{n}]=F_{X_{n},A_{n},B_{n}}(t),\nonumber\\
&\mathbb{P}^{\pi^{1},\pi^{2}}_{i}[X_{n+1}=j \vert \mathcal{H}_{n},A_{n},B_{n},S_{n}]=p_{X_{n},j}(A_{n},B_{n}).
\end{align}

Now we describe the evaluation criterion for our game. The total cost incurred by player l and the total reward gained by player 2 up to time $t>0$ is given by:
{\small\begin{align}\label{total cost}
\mathcal{C}_{t}=\sum_{k=0}^{N_{t}-1}\big[C(X_{k},A_{k},B_{k})+\int_{0}^{S_{k}}\rho_{(X_{k},A_{k},B_{k})}(r)dr\big]+C(X_{N_{t}},A_{N_{t}},B_{N_{t}})+\int_{0}^{t-T_{N_{t}}}\rho_{(X_{N_{t}},A_{N_{t}},B_{N_{t}})}(r)dr.
\end{align} }
For risk-sensitivity parameter $\theta>0$ and a policy pair $(\pi^{1},\pi^{2})$ define,
\begin{align}\label{theta sensitive cost}
J_{\theta}(i,\pi^{1},\pi^{2}):= \limsup_{t\rightarrow \infty}\frac{1}{\theta t}\log\left[\mathbb{E}_i^{\pi^1,\pi^2}\left(e^{\theta \mathcal{C}_t}\right)\right].
\end{align}
We further make the following definitions.
\begin{align*}
L(i,\theta)=\sup_{\pi^{2}\in \Pi_{2}}\inf_{\pi^{1}\in \Pi_{1}}J_{\theta}(i,\pi^{1},\pi^{2}),\\
U(i,\theta)=\inf_{\pi^{1}\in \Pi_{1}}\sup_{\pi^{2}\in \Pi_{2}}J_{\theta}(i,\pi^{1},\pi^{2}),
\end{align*}
where $J_{\theta}(i,\pi^{1},\pi^{2})$ is defined in (\ref{theta sensitive cost}). $L(\cdot)$ is called the lower value of the game and $U(\cdot)$ is called the upper value of the game. The value function, if it exists, is denoted by $V(\cdot)$.
\begin{definition}
	If $I(i,\theta)=L(i,\theta)$ for all $i\in S$, then we say that the game has a value. And the common function is referred to as the value of the game. 
\end{definition}
Here player 1 is interested in minimizing $J_{\theta}(i,\pi^{1},\pi^{2})$ over $\pi^{1}\in \Pi_{1}$ for each $\pi^{2}\in \Pi_{2}$, and
player 2 wants to maximize $J_{\theta}(i,\pi^{1},\pi^{2})$ over $\pi^{2}\in \Pi_{2}$ for each $\pi^{1}\in \Pi_{1}$. This motivates the following definition.
\begin{definition}Suppose that the value of the game exists.
	A policy $\pi^{*^{1}}\in \Pi_{1}$ is said to be optimal for player 1, if for any $i\in S$,
	\begin{align*}
	V(i,\theta)=\sup_{\pi^{2}\in \Pi_{2}}J_{\theta}(i,\pi^{*^1},\pi^{2}), \hspace{.2cm} \forall i\in S.
	\end{align*}
	Similarly, for player 2 a policy $\pi^{*^{2}}\in \Pi_{2}$ is said to be optimal, if for any $i\in S$,
	\begin{align*}
	V(i,\theta)=\inf_{\pi^{1}\in \Pi_{1}}J_{\theta}(i,\pi^{1},\pi^{*^2}), \hspace{.2cm} \forall i\in S.
	\end{align*}
	If $\pi^{*^{m}}\in \Pi_{m}$ is optimal for player $m (m=1,2)$, then $(\pi^{*^{1}},\pi^{*^{2}})\in \Pi_{1}\times \Pi_{2}$ is called a saddle point equilibrium.
\end{definition}
\section{Analysis of Zero-Sum Game}
For $i\in S$, let $\mathcal{P}(A(i))$ and $\mathcal{P}(B(i))$ denote the set of all probability measures on $A(i)$ and $B(i)$ respectively. The analysis of the zero-sum game crucially depends on the following equation.
\newpage
\begin{align}\label{optimal equation}
e^{\theta h(i)}&=\sup_{\varphi\in \mathcal{P}(B(i))}\inf_{\psi\in \mathcal{P}(A(i))}\big[\int_{A(i)}&\int_{B(i)}\psi(da) \varphi(db)  e^{\theta C(i,a,b)}\int_{0}^{B} e^{\theta[\int_{0}^{s}\rho_{(i,a,b)}(t)dt-gs]}dF_{i,a,b}(s)\nonumber\\&\times \sum_{j\in S}e^{\theta h(j)}p_{ij}(a,b) \big],  \hspace{.5cm} i\in S.
\end{align}
where $g$ is a real number and $h(.)$ is a real function defined on the state space $S$. Using Assumption 1 and Fan's minimax theorem \cite{Fan52}, equation (\ref{optimal equation}) can also be written as:
\begin{align}\label{optimal equation1}
e^{\theta h(i)}&=\inf_{\psi\in \mathcal{P}(A(i))}\sup_{\varphi\in \mathcal{P}(B(i))}\big[\int_{A(i)}&\int_{B(i)}\psi(da) \varphi(db)  e^{\theta C(i,a,b)}\int_{0}^{B} e^{\theta[\int_{0}^{s}\rho_{(i,a,b)}(t)dt-gs]}dF_{i,a,b}(s)\nonumber\\&\times \sum_{j\in S}e^{\theta h(j)}p_{ij}(a,b) \big],  \hspace{.5cm} i\in S.
\end{align}
The importance of the above equations is illustrated by the next theorem.
\begin{theorem}\label{verification theorem}
	Suppose that equation \eqref{optimal equation} and hence equation \eqref{optimal equation1} is satisfied by a pair $(g,h(.))$. Under Assumption 1, it follows that the game has a value and is given by $g=V(i,\theta)$. Further if $\phi^{*1}\in \Phi^1$ is the outer minimising selector of the right hand side of \eqref{optimal equation1} and if $\phi^{*2}\in \Phi^2$ is the outer maximising selector of the right hand side of \eqref{optimal equation}, then $(\phi^{*1},\phi^{*2})$ is a saddle point equilibrium.
\end{theorem}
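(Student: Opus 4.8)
The plan is to pass from the continuous-time exponential cost to a multiplicative functional evaluated at the transition epochs, for which the optimality equations \eqref{optimal equation}--\eqref{optimal equation1} furnish a (super/sub)martingale structure. Concretely, I would introduce, for a fixed policy pair, the process
\[
M_n := \exp\Big(\theta\Big[\sum_{k=0}^{n-1}\big(C(X_k,A_k,B_k)+\int_0^{S_k}\rho_{(X_k,A_k,B_k)}(r)\,dr-gS_k\big)+h(X_n)\Big]\Big),
\]
so that $M_0=e^{\theta h(i)}$ and $\sum_{k=0}^{n-1}S_k=T_n$. A direct computation using the Markov relations \eqref{markov equalities} shows that $\mathbb{E}^{\pi^1,\pi^2}_i[M_{n+1}\mid\mathcal H_n,A_n,B_n]$ equals $M_n e^{-\theta h(X_n)}$ times exactly the integrand $Q(X_n,A_n,B_n):=e^{\theta C(X_n,A_n,B_n)}\int_0^B e^{\theta[\int_0^s\rho_{(X_n,A_n,B_n)}(t)\,dt-gs]}dF_{X_n,A_n,B_n}(s)\sum_{j}e^{\theta h(j)}p_{X_n j}(A_n,B_n)$ appearing on the right-hand sides of \eqref{optimal equation} and \eqref{optimal equation1}. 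Simultaneously, since $C$, $h$ and $\rho|_{[0,B]}$ are bounded and $0\le t-T_{N_t}\le S_{N_t}\le B$ by Assumption 1(iii), one checks that $e^{\theta\mathcal C_t}=M_{N_t}\,e^{\theta g t}\,R_t$ with $R_t$ bounded above and below by positive constants independent of $t$; this is the bridge between $J_\theta$ and $M$.

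Next I would exploit the selectors. Integrating the conditional expectation above over the actions, when player $1$ uses the minimizing selector $\phi^{*1}$ of \eqref{optimal equation1}, the equation gives $\int\phi^{*1}(da\mid X_n)\varphi(db)\,Q(X_n,a,b)\le e^{\theta h(X_n)}$ for every response $\varphi$ of player $2$, so $\{M_n\}$ is a nonnegative supermartingale under $(\phi^{*1},\pi^2)$ for every $\pi^2$. Dually, when player $2$ uses the maximizing selector $\phi^{*2}$ of \eqref{optimal equation}, the reverse inequality holds for every $\psi$, so $\{M_n\}$ is a nonnegative submartingale under $(\pi^1,\phi^{*2})$ for every $\pi^1$. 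Since $N_t$ is a stopping time for the discrete-time filtration generated by the transitions, optional sampling applies in both cases at $N_t\wedge n$.

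For the upper bound I would stop the supermartingale: from $\mathbb E[M_{N_t\wedge n}]\le M_0$ and Fatou's lemma (legitimate because $M\ge0$ and $N_t<\infty$ a.s.) we get $\mathbb E[M_{N_t}]\le e^{\theta h(i)}$, whence $\mathbb E^{\phi^{*1},\pi^2}_i[e^{\theta\mathcal C_t}]\le K\,e^{\theta g t}e^{\theta h(i)}$; taking logarithms, dividing by $\theta t$ and letting $t\to\infty$ yields $J_\theta(i,\phi^{*1},\pi^2)\le g$ for all $\pi^2$, hence $U(i,\theta)\le g$. For the lower bound the submartingale inequality $\mathbb E[M_{N_t\wedge n}]\ge M_0$ must be carried to the limit, and here Fatou points the wrong way; I would instead establish uniform integrability of $\{M_{N_t\wedge n}\}_n$. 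Since each increment of the exponent of $M$ is bounded, $M_{N_t\wedge n}\le \exp(\theta c\,N_t)$ for a constant $c$ and fixed $t$, so it suffices to show that $N_t$ has finite exponential moments. This follows from $F_{i,a,b}(0)=0$, the compact support \eqref{compact support of sojourn time} and the continuity-compactness of Assumption 1: a Chernoff bound gives $\mathbb P(N_t>n)\le e^{\lambda t}q(\lambda)^{n+1}$ with $q(\lambda):=\sup_{(i,a,b)\in\mathbb K}\int_0^B e^{-\lambda s}dF_{i,a,b}(s)<1$ and $q(\lambda)\to 0$ as $\lambda\to\infty$, so $\mathbb E[e^{\alpha N_t}]<\infty$ for every $\alpha$. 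Consequently $\mathbb E[M_{N_t}]\ge e^{\theta h(i)}$, giving $\mathbb E^{\pi^1,\phi^{*2}}_i[e^{\theta\mathcal C_t}]\ge (K')^{-1}e^{\theta g t}e^{\theta h(i)}$ and $J_\theta(i,\pi^1,\phi^{*2})\ge g$ for all $\pi^1$, hence $L(i,\theta)\ge g$.

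Finally I would assemble the conclusions. The trivial inequality $L(i,\theta)\le U(i,\theta)$ combined with $g\le L(i,\theta)$ and $U(i,\theta)\le g$ forces $L(i,\theta)=U(i,\theta)=g$, so the value exists and equals $g$. The same two one-sided estimates give $\sup_{\pi^2}J_\theta(i,\phi^{*1},\pi^2)=g=V(i,\theta)$ and $\inf_{\pi^1}J_\theta(i,\pi^1,\phi^{*2})=g=V(i,\theta)$, i.e.\ each selector is optimal and $(\phi^{*1},\phi^{*2})$ is a saddle point. I expect the main obstacle to be the lower bound: transferring the submartingale inequality through the random number of transitions $N_t$ requires the uniform-integrability/exponential-moment control of $N_t$, which is exactly where the no-instantaneous-jump condition $F_{i,a,b}(0)=0$ together with the compactness assumptions are essential; the upper bound, by contrast, is a soft consequence of nonnegativity and Fatou.
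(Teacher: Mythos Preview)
Your approach is essentially the paper's, recast in (super/sub)martingale language. The paper proves an auxiliary Proposition establishing
\[
e^{\theta h(i)}\ \ge\ \mathbb E_i^{\phi^{*1},\pi^2}\bigl[M_{N_t+1}\bigr]
\qquad\text{and}\qquad
e^{\theta h(i)}\ \le\ \mathbb E_i^{\pi^1,\phi^{*2}}\bigl[M_{N_t+1}\bigr]
\]
by a direct induction on $n$ that splits according to $1_{[N_t\le n]}$ and $1_{[N_t>n]}$; this is exactly your optional-sampling-plus-limit argument unwound. The tail term is killed via the paper's Lemma~\ref{technical1}, which is precisely your Chernoff bound $\mathbb P(N_t\ge n)\le\alpha^n e^{r_\alpha t}$ with $\alpha\in(0,1)$ arbitrary, so your uniform-integrability step and the paper's vanishing-tail step are the same computation. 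The bounded-remainder bridge from $M$ to $e^{\theta\mathcal C_t}$ is also identical to the paper's display preceding the conclusion of the proof.

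One point needs correction. The random index $N_t$ is \emph{not} a stopping time for the filtration $\{\mathcal H_n\}$ with respect to which your $\{M_n\}$ is a (super/sub)martingale: $\{N_t\le n\}=\{T_{n+1}>t\}$ depends on $S_n$, which is not $\sigma(\mathcal H_n)$-measurable, and enlarging the filtration to include $S_n$ destroys the martingale inequality (the optimality equations \eqref{optimal equation}--\eqref{optimal equation1} integrate over the sojourn time as well as over the actions). The fix is immediate: stop instead at $N_t+1$, which \emph{is} an $\{\mathcal H_n\}$-stopping time since $\{N_t+1\le n\}=\{T_n>t\}\in\sigma(\mathcal H_n)$. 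The bounded-remainder relation $e^{\theta\mathcal C_t}=M_{N_t+1}\,e^{\theta g t}R_t'$ holds just as well (use \eqref{jump and sojourn time}), and $N_t+1$ is exactly the random index the paper works with in its Proposition~1. With this adjustment your Fatou/UI passage to the limit goes through unchanged.
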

In order to prove Theorem \ref{verification theorem}, we need the following auxiliary lemma.
\begin{lemma}\label{technical1}
	Suppose Assumption 1 holds. Then the following holds:
	\begin{itemize}
		\item[(i)] Given $\alpha \in (0,1)$, there exists an integer $r_{\alpha}>0$ such that, for every $(i,a,b)\in \mathbb{K}$, the inequality $\int_{0}^{B}e^{-r s}dF_{i,a,b}(s)\leq \alpha$ holds for every $r \geq r_{\alpha}$.
		\item[(ii)] For each $\alpha \in (0,1)$, $t\geq 0$ and $n\in \mathbb{N}$, $\mathbb{P}_{i}^{\pi^{1},\pi^{2}}[N_{t}\geq n]\leq \alpha^{n}e^{r_{\alpha} t}$ for all $i\in S$ and $(\pi^1,\pi^2)\in \Pi^1\times \Pi^2$, where $r_{\alpha}$ is as in part (ii). Thus,
		\begin{align}\label{finiteness}
		\mathbb{P}_{i}^{\pi^{1},\pi^{2}}[N_{t}< \infty]=1.
		\end{align}
	\end{itemize}
\end{lemma}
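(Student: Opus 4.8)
The plan is to establish part (i) via a uniform exponential decay estimate using the compact support of the sojourn distributions and their weak continuity, and then to use part (i) to bound the tail of the number of transitions $N_t$ by a large deviation type argument.

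\textbf{Part (i).} First I would fix $\alpha\in(0,1)$ and study the map $(i,a,b)\mapsto\int_0^B e^{-rs}\,dF_{i,a,b}(s)$. For fixed $r$, the integrand $s\mapsto e^{-rs}$ is bounded and continuous on $[0,B]$, so by Assumption 1(iii) this map is continuous in $(a,b)\in A(i)\times B(i)$. Since $A(i)\times B(i)$ is compact (Assumption 1(i)) and $S$ is finite, the supremum $g(r):=\sup_{(i,a,b)\in\mathbb{K}}\int_0^B e^{-rs}\,dF_{i,a,b}(s)$ is attained and is finite. The crux is to show $g(r)\to 0$ as $r\to\infty$. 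The key fact I would exploit is the no-immediate-jump condition \eqref{no immediate jump}, $F_{i,a,b}(0)=0$, which together with right-continuity forces the mass of $F_{i,a,b}$ near $0$ to be small; but since the bound must be uniform over the compact set $\mathbb{K}$, I would argue by contradiction. Suppose $g(r)\not\to 0$; then there is $\epsilon>0$ and a sequence $r_n\to\infty$ with points $(i_n,a_n,b_n)$ for which the integral exceeds $\epsilon$. Passing to a subsequence (finitely many states, compact action sets) I may assume $i_n\equiv i$ and $(a_n,b_n)\to(a_*,b_*)$. Using weak continuity to pass to the limit distribution $F_{i,a_*,b_*}$ and the fact that for $r$ large $\int_0^B e^{-rs}\,dF(s)$ concentrates all weight at $s=0$, I would derive that $F_{i,a_*,b_*}$ has an atom at $0$, contradicting \eqref{no immediate jump}. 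Hence $g(r)\to 0$, and I may choose $r_\alpha$ so that $g(r)\le\alpha$ for all $r\ge r_\alpha$, giving the claim.

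\textbf{Part (ii).} Next I would bound $\mathbb{P}_i^{\pi^1,\pi^2}[N_t\ge n]$. By definition \eqref{first hitting time}, the event $\{N_t\ge n\}$ equals $\{T_n\le t\}$, i.e. the first $n$ sojourn times satisfy $\sum_{k=0}^{n-1}S_k\le t$. I would apply an exponential (Chernoff) estimate with parameter $r_\alpha$ from part (i): for any $r>0$,
\begin{align*}
\mathbb{P}_i^{\pi^1,\pi^2}[T_n\le t]=\mathbb{P}_i^{\pi^1,\pi^2}\big[e^{-rT_n}\ge e^{-rt}\big]\le e^{rt}\,\mathbb{E}_i^{\pi^1,\pi^2}\big[e^{-r\sum_{k=0}^{n-1}S_k}\big].
\end{align*}
The main technical step is to control the expectation of the product $e^{-r\sum_{k=0}^{n-1}S_k}$. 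Using the Markov relations \eqref{markov equalities}, I would condition successively: given $\mathcal{H}_k,A_k,B_k$, the sojourn time $S_k$ has distribution $F_{X_k,A_k,B_k}$, so $\mathbb{E}[e^{-rS_k}\mid\mathcal{H}_k,A_k,B_k]=\int_0^B e^{-rs}\,dF_{X_k,A_k,B_k}(s)\le\alpha$ for $r\ge r_\alpha$ by part (i). Peeling off the factors one at a time (innermost conditioning first) yields $\mathbb{E}_i^{\pi^1,\pi^2}[e^{-r\sum_{k=0}^{n-1}S_k}]\le\alpha^n$, uniformly over all policy pairs and initial states. Taking $r=r_\alpha$ gives $\mathbb{P}_i^{\pi^1,\pi^2}[N_t\ge n]\le\alpha^n e^{r_\alpha t}$.

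\textbf{Finiteness.} Finally, \eqref{finiteness} follows immediately: for fixed $t$, letting $n\to\infty$ gives $\mathbb{P}_i^{\pi^1,\pi^2}[N_t=\infty]=\lim_{n\to\infty}\mathbb{P}_i^{\pi^1,\pi^2}[N_t\ge n]\le\lim_{n\to\infty}\alpha^n e^{r_\alpha t}=0$, whence $\mathbb{P}_i^{\pi^1,\pi^2}[N_t<\infty]=1$.

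I expect the main obstacle to be the uniformity in part (i)---establishing $g(r)\to 0$ uniformly over the admissible set $\mathbb{K}$ rather than pointwise---since this is exactly where the compactness of the action sets, weak continuity of $\{F_{i,a,b}\}$, and the no-immediate-jump condition must be combined; the compact-support assumption \eqref{compact support of sojourn time} is what keeps all integrals over the finite window $[0,B]$ and makes the limiting argument clean.
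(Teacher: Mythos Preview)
Your proposal is correct. The paper itself does not give a proof but simply refers to Lemma~4.1 of \cite{Cadena16} (the one--controller version), so your self-contained argument via the compactness/weak-continuity contradiction for part~(i) and the Chernoff bound plus iterated conditioning on $\mathcal{H}_k,A_k,B_k$ for part~(ii) is exactly the kind of generalization the authors have in mind; the only detail worth making explicit in part~(i) is the splitting $\int_0^B e^{-r_n s}\,dF_{i,a_n,b_n}(s)\le F_{i,a_n,b_n}(\delta)+e^{-r_n\delta}$ before passing to the limit, since both $r_n$ and $(a_n,b_n)$ vary.
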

\begin{proof}
The proof is a simple generalization of Lemma 4.1 in \cite{Cadena16}.
\end{proof}
\begin{proposition}
	Let $(g,h(.))$ be a solution of equation \eqref{optimal equation} and hence of equation \eqref{optimal equation1} . Under Assumption 1, the following are true.\\
	For each $i\in S$, $(\pi^{1},\pi^{2})\in \Pi_{1}\times \Pi_{2}$ and $t>0$:
	\begin{align}\label{greater than inequality}
	 e^{\theta h(i)}\geq \mathbb{E}_{i}^{\phi^{*1},\pi^{2}}\big[e^{\theta[\sum_{k=0}^{N_{t}}(C(X_{k},A_{k},B_{k})+\int_{0}^{S_{k}}\rho_{(X_{k},A_{k},B_{k})}(s)ds)-gT_{N_{t}+1}+h(X_{{N_{t}+1}})]}\big],
	\end{align}
	and also we have,
	\begin{align}\label{less than inequality}
	e^{\theta h(i)}\leq \mathbb{E}_{i}^{\pi^{1},\phi^{*2}}\big[e^{\theta[\sum_{k=0}^{N_{t}}(C(X_{k},A_{k},B_{k})+\int_{0}^{S_{k}}\rho_{(X_{k},A_{k},B_{k})}(s)ds)-gT_{N_{t}+1}+h(X_{{N_{t}+1}})]}\big],
	\end{align}
	where $\phi^{*1}$ and $\phi^{*2}$ are as in Theorem \ref{verification theorem}.
\end{proposition}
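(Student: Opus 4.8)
The plan is to introduce the auxiliary process
\begin{align*}
Z_n := \exp\left(\theta\left[\sum_{k=0}^{n-1}\left(C(X_k,A_k,B_k)+\int_0^{S_k}\rho_{(X_k,A_k,B_k)}(r)dr\right)-gT_n+h(X_n)\right]\right),\quad n\geq 0,
\end{align*}
which is adapted to the filtration $\{\mathcal{H}_n\}$, satisfies $Z_0=e^{\theta h(i)}$ when $X_0=i$, and is such that the right-hand sides of both \eqref{greater than inequality} and \eqref{less than inequality} equal $\mathbb{E}[Z_{N_t+1}]$ under the respective policy pairs. Since $T_n$ is $\mathcal{H}_n$-measurable, the event $\{N_t+1=n\}=\{T_{n-1}\leq t<T_n\}$ lies in $\mathcal{H}_n$, so $N_t+1$ is an $\{\mathcal{H}_n\}$-stopping time, and it is finite almost surely by Lemma \ref{technical1}(ii). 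Thus both estimates reduce to comparing $\mathbb{E}[Z_{N_t+1}]$ with $Z_0$.

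The engine of the proof is a one-step inequality read off from the optimality equations. Writing $G(i,\psi,\varphi)$ for the bracketed integrand on the right-hand side of \eqref{optimal equation}, a direct computation using the Markov relations \eqref{markov equalities}, integrating successively over $(A_n,B_n)$, then $S_n$, then $X_{n+1}$, gives
\begin{align*}
\mathbb{E}\left[Z_{n+1}\mid\mathcal{H}_n\right]=Z_n\, e^{-\theta h(X_n)}\,G\big(X_n,\pi^1_n(\cdot\mid\mathcal{H}_n),\pi^2_n(\cdot\mid\mathcal{H}_n)\big),
\end{align*}
where the sojourn-time and transition factors separate exactly into $\int_0^B e^{\theta[\int_0^s\rho\,dt-gs]}dF$ and $\sum_j e^{\theta h(j)}p_{ij}$. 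For \eqref{greater than inequality} I would let player 1 use $\phi^{*1}$, the outer minimising selector in \eqref{optimal equation1}; then $e^{\theta h(X_n)}=\sup_\varphi G(X_n,\phi^{*1}(\cdot\mid X_n),\varphi)\geq G(X_n,\phi^{*1}(\cdot\mid X_n),\pi^2_n)$, so $\mathbb{E}[Z_{n+1}\mid\mathcal{H}_n]\leq Z_n$ and $\{Z_n\}$ is a nonnegative supermartingale under $\mathbb{P}^{\phi^{*1},\pi^2}_i$. Symmetrically, for \eqref{less than inequality} I would let player 2 use $\phi^{*2}$, the outer maximising selector in \eqref{optimal equation}; then $e^{\theta h(X_n)}=\inf_\psi G(X_n,\psi,\phi^{*2}(\cdot\mid X_n))\leq G(X_n,\pi^1_n,\phi^{*2}(\cdot\mid X_n))$, so $\{Z_n\}$ is a submartingale under $\mathbb{P}^{\pi^1,\phi^{*2}}_i$.

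For the supermartingale case, optional stopping at the bounded times $(N_t+1)\wedge m$ gives $\mathbb{E}[Z_{(N_t+1)\wedge m}]\leq Z_0=e^{\theta h(i)}$, and since $Z_n\geq 0$ and $(N_t+1)\wedge m\to N_t+1<\infty$ almost surely, Fatou's lemma yields $\mathbb{E}[Z_{N_t+1}]\leq e^{\theta h(i)}$, which is \eqref{greater than inequality}. The submartingale case is the main obstacle, since Fatou points the wrong way and one must rule out mass escaping to infinity. Optional stopping gives $\mathbb{E}[Z_{(N_t+1)\wedge m}]\geq e^{\theta h(i)}$, and splitting on $\{N_t+1\leq m\}$ and its complement,
\begin{align*}
\mathbb{E}\left[Z_{(N_t+1)\wedge m}\right]=\mathbb{E}\left[Z_{N_t+1}\mathbf{1}_{\{N_t+1\leq m\}}\right]+\mathbb{E}\left[Z_m\mathbf{1}_{\{N_t\geq m\}}\right],
\end{align*}
the first term increases to $\mathbb{E}[Z_{N_t+1}]$ by monotone convergence. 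For the second term I would bound $Z_m\leq\exp\big(\theta(mM_C+M_\rho t+|g|t+M_h)\big)$ on $\{N_t\geq m\}$, where $M_C=\max_{\mathbb{K}}|C|$ and $M_h=\max_S|h|$ are finite by Assumption 1 and finiteness of $S$, and $0\leq T_m\leq t$; combining this with the tail estimate $\mathbb{P}^{\pi^1,\phi^{*2}}_i[N_t\geq m]\leq\alpha^m e^{r_\alpha t}$ of Lemma \ref{technical1}(ii) and choosing $\alpha\in(0,1)$ small enough that $\alpha e^{\theta M_C}<1$ forces $\mathbb{E}[Z_m\mathbf{1}_{\{N_t\geq m\}}]\to 0$ as $m\to\infty$. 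Hence $\mathbb{E}[Z_{N_t+1}]\geq e^{\theta h(i)}$, giving \eqref{less than inequality}.
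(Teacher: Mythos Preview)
Your argument is correct and is essentially the paper's proof recast in martingale language: the paper derives the same one-step inequality \eqref{Markov step} from the optimality equations, then carries out by hand the induction that you compress into ``$\{Z_n\}$ is a super-/submartingale, apply optional stopping at $(N_t+1)\wedge m$,'' arriving at exactly the split $\mathbb{E}[Z_{N_t+1}\mathbf{1}_{\{N_t+1\le m\}}]+\mathbb{E}[Z_m\mathbf{1}_{\{N_t\ge m\}}]$ and killing the tail term with the bound $\alpha^m e^{r_\alpha t}$ from Lemma~\ref{technical1}. The one genuine simplification in your version is the supermartingale direction: you pass to the limit via Fatou's lemma, whereas the paper treats both directions uniformly with monotone convergence on the first term plus the tail-vanishing argument, which is slightly more work than necessary for \eqref{greater than inequality}.
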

\begin{proof} From \eqref{optimal equation1} we have for any $\varphi \in \mathcal{P}(B(i))$
\begin{align*}e^{\theta h(i)}\geq \big[\int_{A(i)}&\int_{B(i)}\phi^{*1}(da|i) \varphi(db)  e^{\theta C(i,a,b)}\int_{0}^{B} e^{\theta[\int_{0}^{s}\rho_{(i,a,b)}(t)dt-gs]}dF_{i,a,b}(s)\nonumber\\&\times \sum_{j\in S}e^{\theta h(j)}p_{ij}(a,b) \big],  \hspace{.5cm} i\in S.
\end{align*}
Thus for any $\pi^2\in \Pi_2$ we have,
\begin{align}\label{other optimality eqn with expectation}
e^{\theta h(i)}\geq\mathbb{E}_{i}^{\phi^{*1},\pi^{2}}\big[e^{\theta[C(X_{0},A_{0},B_{0})+\int_{0}^{S_{0}}\rho_{(X_{0},A_{0},B_{0})}(t)dt-gS_{0}+h(X_{1})]}\big],  \hspace{.5cm} i\in S.
\end{align}
More generally, via equations \eqref{markov equalities} it follows that for every $n\in \mathbb{N}$,
\begin{align}\label{Markov step}
e^{\theta h(X_{n})}\geq\mathbb{E}_{i}^{\phi^{*1},\pi^{2}}\big[e^{\theta[C(X_{n},A_{n},B_{n})+\int_{0}^{S_{n}}\rho_{(X_{n},A_{n},B_{n})}(t)dt-gS_{n}+h(X_{n+1})]}\vert \mathcal{H}_n\big],  \hspace{.5cm} i\in S.
\end{align}
We prove by induction  that for every non-negative integer n,
\begin{align}\label{induction step}
e^{\theta h(i)}\geq &\mathbb{E}_{i}^{\phi^{*1},\pi^{2}}\big[e^{\theta [\sum_{k=0}^{N_{t}}(C(X_{k},A_{k},B_{k})+\int_{0}^{S_{k}}\rho_{(X_{k},A_{k},B_{k})}(t)dt)-gT_{N_{t}+1}+h(X_{N_{t}+1})]}1_{[N_{t}\leq n]}\big]\nonumber\\&+\mathbb{E}_{i}^{\phi^{*1},\pi^{2}}\big[e^{\theta[\sum_{k=0}^{n} (C(X_{k},A_{k},B_{k})+\int_{0}^{S_{k}}\rho_{(X_{k},A_{k},B_{k})}(t)dt)-gT_{n+1}+h(X_{n+1})]}1_{[N_{t}>n]}\big]
\end{align}
To show this, from \eqref{other optimality eqn with expectation} we get,
\begin{align*}
e^{\theta h(i)}&\geq \mathbb{E}_{i}^{\phi^{*1},\pi^{2}}\big[e^{\theta[C(X_{0},A_{0},B_{0})+\int_{0}^{S_{0}}\rho_{(X_{0},A_{0},B_{0})}(t)dt-gS_{0}+h(X_{1})]}\big]\\
&=\mathbb{E}_{i}^{\phi^{*1},\pi^{2}}\big[e^{\theta[C(X_{0},A_{0},B_{0})+\int_{0}^{S_{0}}\rho_{(X_{0},A_{0},B_{0})}(t)dt-gS_{0}+h(X_{1})]}1_{[N_{t}=0]}\big]\\&+\mathbb{E}_{i}^{\phi^{*^1},\pi^{2}}\big[e^{\theta[C(X_{0},A_{0},B_{0})+\int_{0}^{S_{0}}\rho_{(X_{0},A_{0},B_{0})}(t)dt-gS_{0}+h(X_{1})]}1_{[N_{t}>0]}\big];
\end{align*}
since $T_{1}=S_{0}$, hence we have the basis step for $n=0$. Now suppose that (\ref{induction step}) is true for a non-negative integer $n$. Then we have
\begin{align*}
&e^{\theta[\sum_{k=0}^{n}( C(X_{k},A_{k},B_{k})+\int_{0}^{S_{k}}\rho_{(X_{k},A_{k},B_{k})}(t)dt)-gT_{n+1}+h(X_{n+1})]}1_{[N_{t}> n]}\nonumber\\&=e^{\theta[\sum_{k=0}^{n} (C(X_{k},A_{k},B_{k})+\int_{0}^{S_{k}}\rho_{(X_{k},A_{k},B_{k})}(t)dt)-gT_{n+1}]}1_{[N_{t}\geq n+1]}e^{\theta h(X_{{n+1}})}\\
&\geq e^{\theta[\sum_{k=0}^{n}( C(X_{k},A_{k},B_{k})+\int_{0}^{S_{k}}\rho_{(X_{k},A_{k},B_{k})}(t)dt)-gT_{n+1}]}1_{[N_{t}\geq n+1]}\\&\times \mathbb{E}_{i}^{\phi^{*1},\pi^{2}}\big[e^{\theta[C(X_{n+1},A_{n+1},B_{n+1})+\int_{0}^{S_{n+1}}\rho_{(X_{n+1},A_{n+1},B_{n+1})}(t)dt-gS_{n+1}+h(X_{n+2})]}\vert \mathcal{H}_{n+1}\big]\\
&=\mathbb{E}_{i}^{\phi^{*1},\pi^{2}}\big[e^{\theta[ \sum _{k=0}^{n+1}(C(X_{k},A_{k},B_{k})+\int_{0}^{S_{k}}\rho_{(X{k},A_{k},B_{k})}(t)dt)-g[S_{n+1}+T_{n+1}]+h(X_{n+2})]}\times 1_{[N_{t}\geq n+1]}\vert \mathcal{H}_{n+1}\big]
\end{align*}
where (\ref{Markov step}) was used to deduce the first inequality, whereas the fact that the random variables $1_{[N_{t}\geq n+1]}$ and $\sum _{k=0}^{n}(C(X_{k},A_{k},B_{k})+\int_{0}^{S_{k}}\rho_{(X_{k},A_{k},B_{k})}(t)dt)-gT_{n+1}+h(X_{n+1})$ are $\sigma(\mathcal{H}_{n+1})$-measurable was used in the last step. Since $T_{n+2}=T_{n+1}+S_{n+1}$, by (\ref{jump time}) it follows that
\begin{align*}
&\mathbb{E}_{i}^{\phi^{*1},\pi^{2}}\big[e^{\theta[\sum_{k=0}^{n}(C(X_{k},A_{k},B_{k})+\int_{0}^{S_{k}}\rho_{(X_{k},A_{k},B_{k})}(t)dt)-gT_{n+1}+h(X_{n+1})]}1_{[N_{t}>n]}\big]\\
&\geq \mathbb{E}_{i}^{\phi^{*1},\pi^{2}}\big[e^{\theta[\sum_{k=0}^{n+1}(C(X_{k},A_{k},B_{k})+\int_{0}^{S_{k}}\rho_{(X_{k},A_{k},B_{k})}(t)dt)-gT_{n+2}+h(X_{n+2})]}1_{[N_{t}\geq n+1]}\big]\\
&= \mathbb{E}_{i}^{\phi^{*1},\pi^{2}}\big[e^{\theta[\sum_{k=0}^{n+1}(C(X_{k},A_{k},B_{k})+\int_{0}^{S_{k}}\rho_{(X_{k},A_{k},B_{k})}(t)dt)-gT_{n+2}+h(X_{n+2})]}1_{[N_{t}= n+1]}\big]\\&+\mathbb{E}_{i}^{\phi^{*1},\pi^{2}}\big[e^{\theta[\sum_{k=0}^{n+1}(C(X_{k},A_{k},B_{k})+\int_{0}^{S_{k}}\rho_{(X_{k},A_{k},B_{k})}(t)dt)-gT_{n+2}+h(X_{n+2})]}1_{[N_{t}>n+1]}\big]\\
&= \mathbb{E}_{i}^{\phi^{*1},\pi^{2}}\big[e^{\theta[\sum_{k=0}^{N_{t}}(C(X_{k},A_{k},B_{k})+\int_{0}^{S_{k}}\rho_{(X_{k},A_{k},B_{k})}(t)dt)-gT_{N_{t}+1}+h(X_{N_{t}+1})]}1_{[N_{t}= n+1]}\big]\\&+\mathbb{E}_{i}^{\phi^{*1},\pi^{2}}\big[e^{\theta[\sum_{k=0}^{n+1}(C(X_{k},A_{k},B_{k})+\int_{0}^{S_{k}}\rho_{(X_{k},A_{k},B_{k})}(t)dt)-gT_{n+2}+h(X_{n+2})]}1_{[N_{t}>n+1]}\big].
\end{align*}
so, together with the induction hypothesis it follows that (\ref{induction step}) is also valid for $n+1$. Thus the induction argument is complete. Then Monotone convergence theorem, together with (\ref{finiteness}) gives,
\begin{align}\label{limit}
&\lim_{n\rightarrow \infty}\mathbb{E}_{i}^{\phi^{*1},\pi^{2}}\big[e^{\theta[\sum_{k=0}^{N_{t}}(C(X_{k},A_{k},B_{k})+\int_{0}^{S_{k}}\rho_{(X_{k},A_{k},B_{k})}(t)dt)-gT_{N_{t}+1}+h(X_{N_{t}+1})]}1_{[N_{t}\leq n]}\big]\nonumber\\
&=\mathbb{E}_{i}^{\phi^{*1},\pi^{2}}\big[e^{\theta[\sum_{k=0}^{N_{t}}(C(X_{k},A_{k},B_{k})+\int_{0}^{S_{k}}\rho_{(X_{k},A_{k},B_{k})}(t)dt)-gT_{N_{t}+1}+h(X_{N_{t}+1})]}\big].
\end{align}
Now using Assumption 1 and Lemma \ref{technical1} we get that
\begin{align*}
&\mathbb{E}_{i}^{\phi^{*1},\pi^{2}}\big[e^{\theta[\sum_{k=0}^{n}(C(X_{k},A_{k},B_{k})+\int_{0}^{S_{k}}\rho_{(X_{k},A_{k},B_{k})}(t)dt)-gT_{n+1}+h(X_{n+1})]}1_{[N_{t}>n]}\big]
\rightarrow 0  \hspace{.3cm} as \hspace{.1cm} n\rightarrow \infty.
\end{align*}
Now taking $n\rightarrow \infty$ on both the sides of (\ref{induction step}) and using the last convergence and (\ref{limit}) we get the desired inequality \eqref{greater than inequality}.\\ The other inequality \eqref{less than inequality} also follows analogously starting from \eqref{optimal equation}.
\end{proof}
\textbf{Proof of Theorem \ref{verification theorem}}
We have, $T_{N_{t}}\leq t< T_{N_{t}+1}=T_{N_{t}}+S_{N_{t}}$, for every $t>0$, and thus
\begin{align}\label{jump and sojourn time}
0\leq t-T_{N_{t}}\leq S_{N_{t}}\leq B \hspace{.2cm}  and \hspace{.2cm}  T_{N_{t}+1}-t\leq S_{N_{t}}\leq B.
\end{align}
Now from (\ref{total cost}) we have
\begin{align*}\label{total cost rearrangment}
&\sum_{k=0}^{N_{t}}\big[C(X_{k},A_{k},B_{k})+\int_{0}^{S_{k}}\rho_{(X_{k},A_{k},B_{k})}(r)dr\big]-gT_{N_{t}+1}\nonumber\\&=(\mathcal{C}_{t}-tg)+\int^{S_{N_{t}}}_{t-T_{N_{t}}}\rho_{(X_{N_{t}},A_{N_{t}},B_{N_{t}})}(r)dr-(T_{N_{t}+1}-t)g.
\end{align*}
and together with the equality (\ref{finiteness of the cost rate}) and (\ref{jump and sojourn time}) it follows that
\begin{align}
\bigg|\sum_{k=0}^{N_{t}}\big[C(X_{k},A_{k},B_{k})+\int_{0}^{S_{k}}\rho_{(X_{k},A_{k},B_{k})}(r)dr\big]-gT_{N_{t}+1}-(\mathcal{C}_{t}-tg)\bigg|\leq B(M_{\rho}+\vert g \vert).
\end{align}
Using (\ref{less than inequality}) we get that $e^{-2\theta \vert\vert h\vert\vert}\leq \mathbb{E}_{i}^{\pi^{1},\phi^{*2}}\big[e^{ \theta\big[\sum_{k=0}^{N_{t}}(C(X_{k},A_{k},B_{k})+\int_{0}^{S_{k}}\rho_{(X_{k},A_{k},B_{k})}(r)dr)\big]-gT_{N_{t}+1}}\big]$. Using (\ref{total cost rearrangment}), we have $$e^{-2\theta \vert\vert h\vert\vert}\leq \mathbb{E}_{i}^{\pi^{1},\phi^{*2}}\big[e^{\theta [\mathcal{C}_{t}-tg+B(M_{\rho}+\vert g\vert)]}\big],$$ so that $e^{-2\theta \vert\vert h\vert\vert -\theta B(M_{\rho}+\vert g\vert)+\theta tg}\leq \mathbb{E}_{i}^{\pi^{1},\phi^{*2}}[e^{\theta \mathcal{C}_{t}}]$.
Taking logarithm on both sides, dividing by $\theta t$ and then taking limit $t\to \infty$ we get,
$$g \leq J_{\theta}(i,\pi^1,\phi^{*2}),\quad\forall i\in S.$$
For the other inequality consider inequality (\ref{greater than inequality}). Then proceeding similarly as above we have the following inequality, $$e^{2\theta \vert\vert h\vert\vert+\theta B(M_{\rho}+\vert g\vert)+\theta tg}\geq \mathbb{E}_{i}^{\phi^{*1},\pi^{2}}\big[e^{\theta \mathcal{C}_{t}}\big].$$ Again taking logarithm on both sides, dividing by $\theta t$ and then taking limit $t\to \infty$ we get,
$$g \geq J_{\theta}(i,\phi^{*1},\pi^2),\quad\forall i\in S.$$ Since $(\pi^1,\pi^2)$ was arbitrary, we get
\begin{align*}
g \leq  \displaystyle\inf_{\pi^1\in \Pi_1}J_{\theta}(i,\pi^1,\phi^{*2})\leq L(i,\theta)\leq U(i,\theta)\leq \displaystyle\sup_{\pi^2\in \Pi_2}J_{\theta}(i,\phi^{*1},\pi^2)\leq g.
\end{align*} Hence we have the desired conclusions.

In view of Theorem \ref{verification theorem}, in order to establish the existence of the value of the game and saddle point equilibrium, it sufficies to show the existence of solution of the optimality equation \eqref{optimal equation}. For that we impose one more assumption on our model.
\begin{assumption}
	Under each stationary policy, the embedded discrete-time Markov chain $\{X_{n}\}$ is irreducible.
\end{assumption}
In order to establish the existence of solution of \eqref{optimal equation},  we first consider risk-sensitive average criterion game problem for the discrete time  process $\{X_{n}\}$. For that we consider policies $(\pi^1,\pi^2)$, where for each positive integer $n$, the kernels $(\pi_{n}^{1},\pi_{n}^{2})$ depends only on $X_{0},A_{0},B_{0},X_{1},....,X_{n-1},A_{n-1},B_{n-1},X_{n}$. Given a bounded continuous function $D$ on $\mathbb{K}$, define the discrete-time average  at $i\in S$ under $(\pi^{1},\pi^{2})$ by
\begin{align}\label{discre time total cost}
V_{\theta,D}(i,\pi^{1},\pi^{2}):=\limsup_{n\rightarrow \infty} \frac{1}{\theta n}ln\big(\mathbb{E}^{\pi^{1},\pi^{2}}_{i}\big[e^{\theta \sum_{k=0}^{n-1} D(X_{k},A_{k},B_{k})}\big]\big)
\end{align}
and $\theta$-optimal discrete time average value function, if it exists, is given by
\begin{align}\label{discre time optimal value function}
V^{*}_{\theta, D}(i):=\inf_{\pi^{1}}\sup_{\pi^{2}}V_{\theta,D}(i,\pi^{1},\pi^{2})=\sup_{\pi^{2}}\inf_{\pi^{1}}V_{\theta,D}(i,\pi^{1},\pi^{2})
\end{align}
It is easy to see that the value function $V^{*}_{\theta, D}(\cdot)$ satisfies the following.
\begin{align}\label{monotonicity}
V^{*}_{\theta, D}(\cdot)\leq V^{*}_{\theta, D_{1}}(\cdot) \hspace{.2cm} if \hspace{.2cm} D\leq D_{1}  \hspace{.2cm} and  \hspace{.2cm} V^{*}_{\theta, c+D}(\cdot)=c+V^{*}_{\theta, D}(\cdot)
\end{align}
where $c\in \mathbb{R}$. Since $D\leq D_{1}+\vert\vert D-D_{1}\vert \vert$ it follows that $V^{*}_{\theta, D}(\cdot)\leq V^{*}_{\theta, D_{1}}(\cdot)+\vert\vert D-D_{1}\vert \vert$. Similarly, by interchanging the roles of $D$ and $D_{1}$ this yields that 
\begin{align}\label{estimate}
\vert\vert V^{*}_{\theta, D}(\cdot)-V^{*}_{\theta, D_{1}}(\cdot)\vert\vert \leq \vert\vert D-D_{1}\vert \vert.
\end{align}
Observing that $V^{*}_{\theta, 0}=0$, the monotonicity property in (\ref{monotonicity}) yields that, for bounded continuous functions $D,D_{1}$,
\begin{align}
V^{*}_{\theta,D}\leq 0 \leq V^{*}_{\theta,D_{1}},  \hspace{.2cm} \textup{when} \hspace{.2cm} D\leq 0 \leq D_{1}.
\end{align} We have the following theorem.
\begin{theorem}\label{discrete}
	Under Assumptions 1 and 2, we have the following:
	\begin{itemize}
		\item[(i)] For each bounded continuous function $D$ on $\mathbb{K}$ there exist $\mu_{D}\in \mathbb{R}$ and $h_{D}:S\rightarrow \mathbb{R}$ such that
		\begin{align}\label{discrete optimality}
		e^{\theta[\mu_{D}+h_{D}(i)]}=&\sup_{\varphi\in \mathcal{P}(B(i))}\inf_{\psi\in \mathcal{P}(A(i))}\big[\int_{A(i)}\int_{B(i)}\psi(da)\varphi(db)
		e^{\theta D(i,a,b)}\sum_{j\in S}p_{i,j}(a,b)e^{\theta h(j)}\big]\nonumber \\&=\inf_{\psi\in \mathcal{P}(A(i))}\sup_{\varphi\in \mathcal{P}(B(i))}\big[\int_{A(i)}\int_{B(i)}\psi(da)\varphi(db)
		e^{\theta D(i,a,b)}\sum_{j\in S}p_{i,j}(a,b)e^{\theta h(j)}\big]\nonumber\\
		&\mbox{and}\quad\mu_{D}=V^*_{\theta, D}(i),   \hspace{.3cm} i\in S.
		\end{align}
		\item[(ii)] For bounded continuous functions $D,D_{1}$,
		\begin{align}
		\vert \mu_{D}-\mu_{D_{1}} \vert\leq \vert\vert D-D_{1} \vert\vert.
		\end{align}
	\end{itemize}
\end{theorem}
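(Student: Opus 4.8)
The plan is to read \eqref{discrete optimality} as a nonlinear eigenvalue problem for a Shapley-type operator, solve it by a vanishing-discount argument, and then deduce the identification $\mu_D=V^*_{\theta,D}(i)$ by a verification argument of the same type as in the Proposition and the proof of Theorem \ref{verification theorem}. Part (ii) will then be essentially free from \eqref{estimate}. Working in multiplicative coordinates, for $w\colon S\to(0,\infty)$ set
\[
(\mathcal{T}w)(i)=\sup_{\varphi\in\mathcal{P}(B(i))}\inf_{\psi\in\mathcal{P}(A(i))}\int_{A(i)}\int_{B(i)}\psi(da)\,\varphi(db)\,e^{\theta D(i,a,b)}\sum_{j\in S}p_{ij}(a,b)\,w(j).
\]
By Assumption 1 the integrand is jointly continuous in $(a,b)$ and the sets $A(i),B(i)$ are compact, so Fan's minimax theorem \cite{Fan52} yields the equality of the two orders $\sup\inf$ and $\inf\sup$ displayed in \eqref{discrete optimality}, and a measurable selection theorem produces the outer minimising and maximising selectors $\psi^{*},\varphi^{*}$. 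In logarithmic coordinates $F(h):=\theta^{-1}\log\mathcal{T}(e^{\theta h})$ is monotone and translation-equivariant, $F(h+c\mathbf{1})=F(h)+c$ for every $c\in\mathbb{R}$; thus solving \eqref{discrete optimality} amounts to finding $h_D$ with $F(h_D)=\mu_D+h_D$.

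To produce the eigenpair I would use the vanishing-discount method. For $\alpha\in(0,1)$ introduce the discounted map $F_\alpha(h):=\theta^{-1}\log\big(\mathcal{T}(e^{\alpha\theta h})\big)$, which inserts the discount in the exponent; since $0<\alpha<1$ this $F_\alpha$ is an $\alpha$-contraction in the sup norm and hence has a unique fixed point $h_\alpha$. Fix a reference state $i_0$ and normalise $\tilde h_\alpha:=h_\alpha-h_\alpha(i_0)$. The \emph{crux} of the whole theorem is to show that the span $\max\tilde h_\alpha-\min\tilde h_\alpha$ stays bounded as $\alpha\uparrow1$. This is exactly where Assumption 2 is used: because $S$ is finite, the action sets are compact, and the data are continuous (Assumption 1), the embedded chain under every stationary pair satisfies a uniform minorisation (simultaneous Doeblin) condition, i.e.\ there exist an integer $m$ and $\delta>0$, independent of the players' strategies, giving an $m$-step transition lower bound $\delta$. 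This yields a span contraction of the form $\mathrm{sp}\big(F^{m}(h)-F^{m}(h')\big)\le(1-\delta)\,\mathrm{sp}(h-h')$ and in turn the sought uniform bound on $\mathrm{sp}(\tilde h_\alpha)$. Since $S$ is finite, one then extracts $\alpha_n\uparrow1$ along which $\tilde h_{\alpha_n}\to h_D$ and $(1-\alpha_n)h_{\alpha_n}(i_0)\to\mu_D$; passing to the limit in the fixed-point relation for $h_{\alpha_n}$, using continuity of the Shapley operator in its argument, gives \eqref{discrete optimality}.

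With the eigenpair in hand I would iterate \eqref{discrete optimality} along the selector $\psi^{*}$ (respectively $\varphi^{*}$), exactly as in the Proposition and the proof of Theorem \ref{verification theorem} but in the simpler additive, discrete-time setting: this produces the one-sided estimate $e^{\theta[n\mu_D+h_D(i)]}\ge\mathbb{E}_i^{\psi^{*},\pi^2}\big[e^{\theta(\sum_{k=0}^{n-1}D(X_k,A_k,B_k)+h_D(X_n))}\big]$ and the reverse inequality under $\varphi^{*}$. Applying $\tfrac{1}{\theta n}\log(\cdot)$, letting $n\to\infty$, and discarding the terminal term $h_D(X_n)$ (which is bounded because $S$ is finite) shows $V^*_{\theta,D}(i)=\mu_D$ for all $i$, simultaneously re-deriving the min–max equality in \eqref{discre time optimal value function}. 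Finally, part (ii) is immediate: since $\mu_D=V^*_{\theta,D}(i)$ and $\mu_{D_1}=V^*_{\theta,D_1}(i)$ are constant in $i$, the estimate \eqref{estimate} gives $|\mu_D-\mu_{D_1}|=\|V^*_{\theta,D}(\cdot)-V^*_{\theta,D_1}(\cdot)\|\le\|D-D_1\|$. Apart from the uniform span bound, every step is continuity–compactness bookkeeping (Assumption 1 and Fan's theorem) or a direct analogue of arguments already carried out in the excerpt, so the single genuine obstacle is the strategy-uniform minorisation furnished by Assumption 2 together with the finiteness of $S$.
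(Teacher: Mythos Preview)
Your proposal follows the same vanishing-discount route as the paper: obtain the discounted fixed point by contraction, normalise, extract subsequential limits on the finite state space, pass to the limit in the Shapley equation, and then identify $\mu_D$ by iterating along optimal selectors. The paper outsources the crucial compactness step (boundedness of the normalised family $w_{\beta_n}$) to Proposition~3.1 of \cite{Chen19} and the verification step to Lemma~2.3 of \cite{Daniel19}, whereas you attempt to supply the mechanism explicitly; part~(ii) is handled identically via \eqref{estimate}.

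The one real gap is in your justification of the span bound. Assumption~2 asserts only that the embedded chain is irreducible under every stationary pair; it says nothing about aperiodicity. Without aperiodicity there is no single $m$ for which the $m$-step kernel minorises a fixed positive measure---a deterministic two-cycle is irreducible, yet $P^m$ is a permutation matrix for every $m$---so the uniform simultaneous Doeblin condition you invoke can fail, and with it the span-contraction inequality $\mathrm{sp}(F^m h - F^m h')\le(1-\delta)\,\mathrm{sp}(h-h')$. The uniform bound on $\mathrm{sp}(\tilde h_\alpha)$ that you need is nonetheless true under Assumptions~1 and~2, but the argument that actually works is a first-passage estimate rather than a contraction: finiteness of $S$, compactness of the action sets and continuity of $p_{ij}(a,b)$ yield a strategy-uniform lower bound on the probability of hitting a fixed reference state within $|S|$ steps, and from this one bounds $h_\alpha(i)-h_\alpha(i_0)$ directly in terms of the exponential cost accumulated up to that first visit. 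With the Doeblin/span-contraction step replaced by such a hitting-time estimate, the remainder of your plan is correct and coincides with the paper's.
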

\begin{proof}
The proof of $(i)$ follows by putting together arguments and results from the existing literature on risk-sensitive control of discrete-time Markov chains. We just outline the steps.\\
\textbf{Step 1:} Using standard contraction argument it can be shown that for each $\beta \in (0,1)$ there exists function $V_{\beta}(\cdot)$ on $S$ satisfying
\begin{align}\label{discounted}
		e^{\theta V_{\beta}(i)}=&\sup_{\varphi\in \mathcal{P}(B(i))}\inf_{\psi\in \mathcal{P}(A(i))}\big[\int_{A(i)}\int_{B(i)}\psi(da)\varphi(db)
		e^{\theta D(i,a,b)}\sum_{j\in S}p_{i,j}(a,b)e^{\theta\beta V_{\beta}(j)}\big]\nonumber\\&=\inf_{\psi\in \mathcal{P}(A(i))}\sup_{\varphi\in \mathcal{P}(B(i))}\big[\int_{A(i)}\int_{B(i)}\psi(da)\varphi(db)
		e^{\theta D(i,a,b)}\sum_{j\in S}p_{i,j}(a,b)e^{\theta\beta V_{\beta}(j)}\big],   \hspace{.3cm} i\in S.
		\end{align} Also it is true that $||V_{\beta}||\leq \frac{||D||}{1-\beta}$.\\
\textbf{Step 2:} Fix a sequence $\beta_n\uparrow 1$. For $n\geq 1$, define
\begin{align}\label{normalisation}
z_{\beta_n}=\displaystyle\sup_{i\in S}V_{\beta_n}(i),\quad w_{\beta_n}(i)=V_{\beta_n}(i)-z_{\beta_n},\quad g_{\beta_n}=(1-\beta_n)z_{\beta_n}.
\end{align}	From \eqref{discounted} and \eqref{normalisation} we get 	
\begin{align}\label{normalised}
		e^{\theta w_{\beta_n}(i)+\theta g_{\beta_n}}=&\sup_{\varphi\in \mathcal{P}(B(i))}\inf_{\psi\in \mathcal{P}(A(i))}\big[\int_{A(i)}\int_{B(i)}\psi(da)\varphi(db)
		e^{\theta D(i,a,b)}\sum_{j\in S}p_{i,j}(a,b)e^{\theta\beta_n w_{\beta_n}(j)}\big]\\&=\inf_{\psi\in \mathcal{P}(A(i))}\sup_{\varphi\in \mathcal{P}(B(i))}\big[\int_{A(i)}\int_{B(i)}\psi(da)\varphi(db)
		e^{\theta D(i,a,b)}\sum_{j\in S}p_{i,j}(a,b)e^{\theta\beta_n w_{\beta_n}(j)}\big],   \hspace{.3cm} i\in S.
		\end{align}Now arguing as in Proposition 3.1 in \cite{Chen19}, it can be shown that there exists a subsequence of $\beta_n$, which we relabel as $\beta_n$ and function $h_D(i)$ and constant $\mu_D$ such that $h_D(i)=\lim_{n\to \infty}w_{\beta_n}(i)$ and $\mu_D=\lim_{n\to \infty}g_{\beta_n}$.\\
\textbf{Step 3:} Now taking limit in \eqref{normalised} and using Step 2 we get \eqref{discrete optimality}.\\
\textbf{Step 4:} The fact that $\mu_D=V^*_{\theta, D}(i)$ follows as in Lemma 2.3 in \cite{Daniel19}.\\
The proof of $(ii)$ is straightforward from part $(i)$ and \eqref{estimate}.

\end{proof}

\begin{lemma}\label{technical2}
	Suppose that Assumption 1 is valid and for each $g\in \mathbb{R}$ define the function $D_{g}:\mathbb{K}\rightarrow \mathbb{R}$ by
	\begin{align}\label{cost}
	D_{g}(i,a,b)=C(i,a,b)+\frac{1}{\theta}ln\big(\int_{0}^{B}e^{\theta[\int_{0}^{s}\rho_{(i,a,b)}(t)dt-gs]}dF_{i,a,b}(s)\big).
	\end{align}
	The following assertions hold.
	\begin{itemize}
		\item[(i)] $D_{g}$ is bounded continuous on $\mathbb{K}$ for each $g\in \mathbb{R}$. 
		\item[(ii)] $\vert\vert D_{g}-D_{g_{1}} \vert\vert  \leq B\vert g-g_{1}\vert$, $g,g_{1}\in \mathbb{R}$.
		\item[(iii)] There exist $g^{-}\geq 0$ such that $D_{g^{-}}\leq 0$.
		
		\item[(iv)] $D_{g_{+}}\geq 0$ for some $g^{+}\leq 0$.
	\end{itemize}
\end{lemma}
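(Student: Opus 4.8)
The plan is to verify the four assertions about $D_g$ directly from its definition in \eqref{cost}, relying on Assumption 1 and the finiteness bound \eqref{finiteness of the cost rate}. Throughout, write $I_g(i,a,b):=\int_{0}^{B}e^{\theta[\int_{0}^{s}\rho_{(i,a,b)}(t)dt-gs]}dF_{i,a,b}(s)$ for the integral appearing inside the logarithm, so that $D_g=C+\tfrac{1}{\theta}\ln I_g$. The key observation feeding every part is that the exponent $\theta[\int_{0}^{s}\rho_{(i,a,b)}(t)dt-gs]$ is uniformly bounded on $\mathbb{K}\times[0,B]$: since $|\rho|\le M_\rho$ by \eqref{finiteness of the cost rate} and $s\in[0,B]$, we have $|\int_0^s\rho(t)dt-gs|\le B(M_\rho+|g|)$, whence $e^{-\theta B(M_\rho+|g|)}\le I_g\le e^{\theta B(M_\rho+|g|)}$ (using $F_{i,a,b}(B)=1$ so the measure has total mass one). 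Since $|C|$ is bounded (continuous on the compact $\mathbb{K}$), part (i)'s boundedness is immediate, and continuity of $D_g$ follows because $C$ is continuous by Assumption 1(ii), the integrand $(a,b,s)\mapsto e^{\theta[\int_0^s\rho_{(i,a,b)}(t)dt-gs]}$ is continuous by Assumption 1(iv), and the map $(a,b)\mapsto\int_0^B u(s)\,dF_{i,a,b}(s)$ is continuous for bounded measurable $u$ by Assumption 1(iii); composing with $\ln$ (well-defined since $I_g>0$) preserves continuity.

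For part (ii), I would estimate $|D_g-D_{g_1}|=\tfrac{1}{\theta}|\ln I_g-\ln I_{g_1}|$. The cleanest route is the mean value theorem applied to $g\mapsto\ln I_g$: differentiating under the integral sign gives $\frac{d}{dg}\ln I_g=\frac{-\theta\int_0^B s\,e^{\theta[\int_0^s\rho\,dt-gs]}dF_{i,a,b}(s)}{I_g}$, and since $0\le s\le B$ on the support, this derivative lies in $[-\theta B,0]$ in absolute value bounded by $\theta B$. Hence $\tfrac{1}{\theta}|\ln I_g-\ln I_{g_1}|\le B|g-g_1|$, uniformly in $(i,a,b)$, giving the claimed Lipschitz estimate. (Alternatively one can bound the ratio $I_g/I_{g_1}$ pointwise by $e^{\theta B|g-g_1|}$ using $e^{-\theta gs}/e^{-\theta g_1 s}=e^{\theta(g_1-g)s}\in[e^{-\theta B|g-g_1|},e^{\theta B|g-g_1|}]$ on $[0,B]$, which avoids differentiation entirely and is probably the preferred phrasing.)

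For parts (iii) and (iv), I would use the monotonicity of $D_g$ in $g$ together with the explicit bound from part (i). As $g\to+\infty$, the factor $e^{-\theta gs}$ forces $I_g\to 0$ for every fixed $(i,a,b)$; more usefully, for $g\ge 0$ the uniform bound gives $D_g\le \|C\|+\tfrac{1}{\theta}\ln\big(e^{\theta B M_\rho}\int_0^B e^{-\theta g s}dF_{i,a,b}(s)\big)$. Invoking Lemma \ref{technical1}(i) with a suitable $\alpha$, one can choose $g^-\ge 0$ large enough that $\int_0^B e^{-\theta g^- s}dF_{i,a,b}(s)$ is small enough uniformly in $(i,a,b)$ to drive the right-hand side below $0$, yielding $D_{g^-}\le 0$. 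Symmetrically, for part (iv) taking $g^+\le 0$ with $|g^+|$ large makes $e^{-\theta g^+ s}=e^{\theta|g^+|s}\ge 1$ large, so $I_{g^+}\ge e^{-\theta B M_\rho}\int_0^B e^{\theta|g^+|s}dF_{i,a,b}(s)$ grows; the main subtlety is that the sojourn times are bounded below away from $0$ only in distribution, so I must ensure $\int_0^B e^{\theta|g^+|s}dF_{i,a,b}(s)\to\infty$ uniformly. The main obstacle is exactly this uniformity in parts (iii)--(iv): because $F_{i,a,b}(0)=0$ by \eqref{no immediate jump} the sojourn time is a.s. positive, but one needs a uniform-over-$\mathbb{K}$ handle on how much mass sits away from zero; here Lemma \ref{technical1}(i) supplies precisely the uniform smallness of $\int_0^B e^{-rs}dF_{i,a,b}(s)$ needed for (iii), and a companion uniform lower bound (or a compactness-of-$\mathbb{K}$ argument producing a uniform lower quantile of the sojourn distributions) handles (iv).
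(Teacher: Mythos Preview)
Your argument is correct and, in fact, more detailed than what the paper provides: the paper's proof of this lemma is simply the sentence ``The proof is a straight forward generalization of Lemma 6.1 in \cite{Cadena16},'' so there is no in-paper proof to compare against. Your direct verification of (i)--(iv) is exactly the kind of argument that reference would contain.

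Two minor remarks. First, in part (i) the continuity step needs a small additional comment: Assumption 1(iii) gives continuity of $(a,b)\mapsto\int_0^B u(s)\,dF_{i,a,b}(s)$ for a \emph{fixed} $u$, whereas your integrand depends on $(a,b)$ through $\rho_{(i,a,b)}$; one combines Assumption 1(iii) and (iv) via a triangle-inequality/uniform-continuity argument on the compact $A(i)\times B(i)\times[0,B]$, which is routine but worth a sentence. Second, in part (iv) your sketch is on the right track but the cleanest way to close it is Jensen's inequality: since $(i,a,b)\mapsto\int_0^B s\,dF_{i,a,b}(s)$ is continuous on the compact set $\mathbb{K}$ (by Assumption 1(iii) with $u(s)=s$) and strictly positive (by \eqref{no immediate jump}), there is a uniform $\epsilon>0$ with $\int_0^B s\,dF_{i,a,b}(s)\ge\epsilon$; then $\int_0^B e^{\theta|g^+|s}\,dF_{i,a,b}(s)\ge e^{\theta|g^+|\epsilon}$, giving $D_{g^+}\ge -\|C\|-BM_\rho+|g^+|\epsilon$, which is nonnegative for $|g^+|$ large. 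This makes precise the ``compactness-of-$\mathbb{K}$'' step you allude to.
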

\begin{proof}
The proof is a straight forward generalization of Lemma 6.1 in \cite{Cadena16}.
\end{proof}
We finally have the existence theorem.
\begin{theorem}\label{existence}(\textbf{Existence of solutions})
	Under Assumptions 1 and 2, there exists $g\in \mathbb{R}$ and $h:S\rightarrow \mathbb{R}$ such that the optimality equation (\ref{optimal equation}) is satisfied.
\end{theorem}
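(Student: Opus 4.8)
The plan is to reduce the existence of a solution of \eqref{optimal equation} to a scalar root-finding problem, using as a bridge the auxiliary cost $D_g$ introduced in \eqref{cost}. The key observation is that, by the very definition \eqref{cost}, one has $e^{\theta D_g(i,a,b)}=e^{\theta C(i,a,b)}\int_{0}^{B}e^{\theta[\int_{0}^{s}\rho_{(i,a,b)}(t)dt-gs]}\,dF_{i,a,b}(s)$. Hence, after this substitution, the discrete-time optimality equation \eqref{discrete optimality} written for the cost $D=D_g$ becomes termwise identical to \eqref{optimal equation}, except that its left-hand side carries the extra normalizing constant $\mu_{D_g}$ (and $h$ is replaced by $h_{D_g}$). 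By Lemma \ref{technical2}(i) the function $D_g$ is bounded and continuous on $\mathbb{K}$ for every $g\in\mathbb{R}$, so Theorem \ref{discrete} applies and furnishes a constant $\mu_{D_g}=V^{*}_{\theta,D_g}(i)$ and a function $h_{D_g}:S\to\mathbb{R}$ solving \eqref{discrete optimality}. Consequently, if I can produce a value of $g$ for which $\mu_{D_g}=0$, then the pair $(g,h_{D_g})$ solves \eqref{optimal equation} and the theorem follows. I therefore set $\Gamma(g):=\mu_{D_g}$ and seek a zero of $\Gamma$.

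Next I would show that $\Gamma$ is continuous. Combining the Lipschitz estimate $\|\mu_D-\mu_{D_1}\|\le\|D-D_1\|$ of Theorem \ref{discrete}(ii) with the bound $\|D_g-D_{g_1}\|\le B|g-g_1|$ of Lemma \ref{technical2}(ii) yields $|\Gamma(g)-\Gamma(g_1)|\le B|g-g_1|$, so $\Gamma$ is in fact Lipschitz continuous on $\mathbb{R}$.

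Finally I would locate a sign change of $\Gamma$ and invoke the intermediate value theorem. By Lemma \ref{technical2}(iii) there is $g^{-}\ge 0$ with $D_{g^{-}}\le 0$; since $V^{*}_{\theta,0}=0$, the monotonicity recorded in \eqref{monotonicity} gives $\Gamma(g^{-})=V^{*}_{\theta,D_{g^{-}}}\le V^{*}_{\theta,0}=0$. Symmetrically, Lemma \ref{technical2}(iv) provides $g^{+}\le 0$ with $D_{g^{+}}\ge 0$, whence $\Gamma(g^{+})=V^{*}_{\theta,D_{g^{+}}}\ge V^{*}_{\theta,0}=0$. As $g^{+}\le g^{-}$ and $\Gamma$ is continuous, the intermediate value theorem produces $g^{*}\in[g^{+},g^{-}]$ with $\Gamma(g^{*})=\mu_{D_{g^{*}}}=0$. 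Taking $g=g^{*}$ and $h=h_{D_{g^{*}}}$ then gives a solution of \eqref{optimal equation}.

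The conceptual crux—and the only step requiring genuine care—is the reduction in the first paragraph: recognizing that \eqref{optimal equation} is exactly the discrete-time optimality equation \eqref{discrete optimality} for the cost $D_g$ with its normalizing constant forced to vanish, which one must verify holds termwise under the substitution $e^{\theta D_g}=e^{\theta C}\int_0^B e^{\theta[\int_0^s\rho\,dt-gs]}dF$. Once this identification is in place, no serious obstacle remains: the argument rests entirely on the continuity of $g\mapsto\mu_{D_g}$ and on the sign change at $g^{+}$ and $g^{-}$, both of which are delivered directly by Theorem \ref{discrete} and Lemma \ref{technical2} together with the monotonicity \eqref{monotonicity}, so the final existence statement follows by a routine intermediate value argument.
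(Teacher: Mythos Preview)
Your proposal is correct and follows essentially the same approach as the paper's own proof: identify \eqref{optimal equation} with the discrete-time equation \eqref{discrete optimality} for $D=D_g$ with $\mu_{D_g}=0$, use Theorem~\ref{discrete}(ii) together with Lemma~\ref{technical2}(ii) to get continuity of $g\mapsto\mu_{D_g}$, and then use Lemma~\ref{technical2}(iii)--(iv) with the monotonicity \eqref{monotonicity} and the intermediate value theorem to produce a root. The paper's proof is a terse three-sentence version of exactly this argument; you have simply spelled out the details.
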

\begin{proof}
For each $g$ consider $D_g$ given by \eqref{cost}. Combining Lemma \ref{technical2} and Theorem \ref{discrete} we get that $\mu_{D_g}$ is continuous in $g$. So again using Lemma \ref{technical2} and intermediate value property we get the existence of a $g$ such that $\mu_{D_g}=0$. Hence we have the result from Theorem \ref{discrete}.
\end{proof}

\section{Non-zero Sum Game Model}
In the non-zero sum game model we assume that there is no immediate cost and individual players have there own running cost functions. For $m=1,2$, we denote the running cost function for player $m$ by $\rho^m$. Here the evolution of the game is similar, except for the fact that upon taking their individual actions both players incur a holding cost upto the next transition. The definition of the policies is same as the zero-sum case. Thus, the total cost upto a positive time t for player 1 is given by:
\begin{align}\label{total cost for player 1}
\mathcal{C}_{t}^{1}=\sum_{k=0}^{N_{t}-1}\int_{0}^{S_{k}}\rho^1_{(X_{k},A_{k},B_{k})}(r)dr+\int_{0}^{t-T_{N_{t}}}\rho^1_{(X_{N_{t}},A_{N_{t}},B_{N_{t}})}(r)dr,
\end{align}
while for player 2 it is given by:
\begin{align}\label{total cost for player 2}
\mathcal{C}_{t}^{2}=\sum_{k=0}^{N_{t}-1}\int_{0}^{S_{k}}\rho^2_{(X_{k},A_{k},B_{k})}(r)dr+\int_{0}^{t-T_{N_{t}}}\rho^2_{(X_{N_{t}},A_{N_{t}},B_{N_{t}})}(r)dr.
\end{align}
Here the objective of each player is to minimise their own average costs.
\begin{definition} Fix a pair of policies $(\pi^1,\pi^2)\in \Pi_1\times \Pi_2$. 
	For $m=1,2$, define the value function for player 1 as $V_{\theta}^{1}(\pi^2)=\inf_{\pi^{1}}J_{\theta}^1(i,\pi^{1},\pi^{2})$, where $J_{\theta}^1$ is given by \eqref{theta sensitive cost}, with $\mathcal{C}_t$ replaced by $\mathcal{C}_t^1$. Similarly, the value function for player 2 is given by  $V_{\theta}^{2}(\pi^1)=\displaystyle\inf_{\pi^{2}}J_{\theta}^2(i,\pi^{1},\pi^{2})$, where $J_{\theta}^2$ is given by \eqref{theta sensitive cost}, with $\mathcal{C}_t$ replaced by $\mathcal{C}_t^2$.
\end{definition}
\begin{definition}(Nash equilibrium)
	A pair of policies $(\pi^{*^{1}},\pi^{*^{2}})\in \Pi_{1}\times \Pi_{2}$ is called a Nash equilibrium for the non-zero sum game if 
	\begin{align*}
	&J_{\theta}^1(i,\pi^{*^1},\pi^{*^2})\leq J_{\theta}^1(i,\pi^{1},\pi^{*^2}) \hspace{.3cm} and\\
	&J_{\theta}^2(i,\pi^{*^1},\pi^{*^2})\leq J_{\theta}^2(i,\pi^{*^1},\pi^{2}),
	\end{align*}
	for all $i\in S$ and  $(\pi^{1},\pi^{2})\in \Pi_{1}\times \Pi_{2}$.
\end{definition}

\section{Analysis of Non-Zero Sum Game}

We wish to establish the existence of Nash equilibrium for the non-zero sum game. To that end we, just like in the zero-sum case first consider a discrete time non-zero sum game given by the embedded Markov chain. Given two bounded continuous functions $D_1$ and $D_2$ on $\mathbb{K}$, we define for $i\in S$, under $(\pi^{1},\pi^{2})$, the discrete-time cost functional for player $m, m=1,2$ by
\begin{align}
V_{\theta,D_m}(i,\pi^{1},\pi^{2}):=\limsup_{n\rightarrow \infty} \frac{1}{\theta n}ln\big(\mathbb{E}^{\pi^{1},\pi^{2}}_{i}\big[e^{\theta \sum_{k=0}^{n-1} D_m(X_{k},A_{k},B_{k})}\big]\big).
\end{align}
We have the following discrete-time theorem.
\begin{theorem}\label{disc}
Suppose that Assumptions 1 and 2 are satisfied. Fix a pair of stationary strategies $(\phi^1,\phi^2)$. Then there exist functions $y^{\phi^1}$ and $y^{\phi^2}$ on $S$ and constants $\mu^{\phi^1}$ and $\mu^{\phi^2}$ such that the following are true. \\
	(i)\begin{align}\label{verification player 1}
	e^{\theta y^{\phi^2}(i)+\theta \mu^{\phi^2}}&=\inf_{\psi \in\mathcal{P}(A(i))}\biggl\{\int_{A(i)}\int_{B(i)}e^{\theta D_{1}(i,a,b)}\times \sum_{j\in S}e^{\theta y^{\phi^2}(j)}p_{i,j}(a,b)\phi^{2}(db\vert i)\psi(da)\biggr\}\,\,\forall i\in S,
	\end{align}
	and $\mu^{\phi^2}=\inf_{\pi^1}V_{\theta,D_1}(i,\pi^{1},\phi^{2})$.\\
	(ii)\begin{align}\label{verification player 2}
	e^{\theta y^{\phi^1}(i)+\theta \mu^{\phi^1}}&=\inf_{\varphi \in\mathcal{P}(B(i))}\biggl\{\int_{A(i)}\int_{B(i)}e^{\theta D_{2}(i,a,b)}\times \sum_{j\in S}e^{\theta y^{\phi^1}(j)}p_{i,j}(a,b)\varphi(db)\phi^{1}(da\vert i)\biggr\}\,\,\forall i\in S,
	\end{align}
	and $\mu^{\phi^1}=\inf_{\pi^2}V_{\theta,D_2}(i,\phi^{1},\pi^{2})$.
\end{theorem}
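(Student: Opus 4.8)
The plan is to observe that once the stationary policy $\phi^2$ is frozen, part (i) is no longer a game but the average-cost optimality equation of a \emph{single-controller} risk-sensitive Markov decision problem in which only player $1$ optimizes. First I would integrate out $\phi^2$ and define, for $a\in A(i)$, the nonnegative kernel
\begin{align*}
K_1(i,a,j)=\int_{B(i)}e^{\theta D_1(i,a,b)}\,p_{i,j}(a,b)\,\phi^2(db\vert i),
\end{align*}
so that the bracketed expression in \eqref{verification player 1} equals $\int_{A(i)}\sum_{j\in S}K_1(i,a,j)e^{\theta y^{\phi^2}(j)}\psi(da)$. Since this is linear in $\psi$, the infimum over $\mathcal{P}(A(i))$ is attained at a Dirac mass, so \eqref{verification player 1} is equivalent to the scalar equation $e^{\theta(\mu^{\phi^2}+y^{\phi^2}(i))}=\inf_{a\in A(i)}\sum_{j}K_1(i,a,j)e^{\theta y^{\phi^2}(j)}$. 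Assumption 1 makes $(a,b)\mapsto D_1(i,a,b)$ and $p_{i,j}(a,b)$ continuous on the compact set $A(i)\times B(i)$, hence $a\mapsto K_1(i,a,j)$ is continuous and the infimum is attained by a measurable selector.

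With this reduction in hand, I would establish existence of the pair $(\mu^{\phi^2},y^{\phi^2})$ by the same vanishing-discount scheme used in Theorem~\ref{discrete}, now in its one-controller (no minimax) form. Concretely: a contraction argument yields, for each $\beta\in(0,1)$, a solution $V_\beta$ of the discounted equation $e^{\theta V_\beta(i)}=\inf_{a\in A(i)}\sum_j K_1(i,a,j)e^{\theta\beta V_\beta(j)}$ with $\|V_\beta\|\le \|D_1\|/(1-\beta)$; then, setting $z_\beta=\sup_i V_\beta(i)$, $w_\beta=V_\beta-z_\beta$, $g_\beta=(1-\beta)z_\beta$ and passing to a subsequence $\beta_n\uparrow1$, the normalized functions $w_{\beta_n}$ and the constants $g_{\beta_n}$ converge to limits $y^{\phi^2}$ and $\mu^{\phi^2}$; finally, passing to the limit in the normalized equation produces the optimality equation. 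This is exactly where Assumption 2 enters: for any stationary $\phi^1$ the pair $(\phi^1,\phi^2)$ is a stationary policy pair, so the embedded chain is irreducible, which keeps the span of $V_{\beta_n}$ uniformly bounded — precisely the hypothesis under which the argument of Proposition 3.1 in \cite{Chen19} applies.

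Finally, the identification $\mu^{\phi^2}=\inf_{\pi^1}V_{\theta,D_1}(i,\pi^1,\phi^2)$ follows, as in Lemma 2.3 of \cite{Daniel19}, by iterating the optimality equation along an arbitrary policy $\pi^1$ of player $1$ to obtain $\mu^{\phi^2}\le V_{\theta,D_1}(i,\pi^1,\phi^2)$, with equality attained by the stationary minimizing selector of $K_1$. Part (ii) is proved by the identical argument with the roles of the players interchanged: freeze $\phi^1$, integrate against $\phi^1(da\vert i)$ to form $K_2(i,b,j)=\int_{A(i)}e^{\theta D_2(i,a,b)}p_{i,j}(a,b)\phi^1(da\vert i)$, and repeat the scheme. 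I expect the only genuine obstacle to be the convergence step — verifying that frozen-opponent irreducibility delivers the uniform bound on the span of $w_{\beta_n}$ required for the vanishing-discount limit; once the problem is recast as single-controller control, the remaining steps are routine.
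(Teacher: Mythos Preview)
Your proposal is correct and follows essentially the same approach as the paper: both proceed by the vanishing-discount scheme (contraction for the $\beta$-discounted equation, sup-normalization, subsequential convergence of $(w_{\beta_n},g_{\beta_n})$ via the argument of Proposition~3.1 in \cite{Chen19} under Assumption~2, passage to the limit, and identification of the constant by a verification argument). The only cosmetic differences are that you first integrate out $\phi^2$ to form the kernel $K_1$ and reduce the infimum over $\mathcal{P}(A(i))$ to one over $A(i)$ by linearity, and that you cite Lemma~2.3 of \cite{Daniel19} rather than Theorem~4.1 of \cite{Chen19} for the identification of $\mu^{\phi^2}$; neither changes the substance.
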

\begin{proof} The proof again follows by putting together arguments and result from the existing literature. So like in the zero-sum case we outline the steps.\\
\textbf{Step 1:} Let $\alpha\in (0,1)$. Then using a contraction argument the following can be shown.  
	 
	\begin{itemize}
		\item[(a)] For each fixed $\phi^{2} \in \Phi^{2}$, there exists a function $w^{\phi^{2},\alpha}$ such that
		\begin{align}\label{discounted approaches for player 1}
		e^{\theta w^{\phi^{2},\alpha}(i)}=\inf_{\psi \in\mathcal{P}(A(i))}\bigg[\int_{A(i)}\int_{B(i)} e^{\theta D_{1}(i,a,b)}\times \sum_{j\in S}e^{\theta \alpha w^{\phi^{2},\alpha}(j)}p_{i,j}(a,b) \phi^{2}(db\vert i)\psi(da)\bigg],
		\end{align}
		for all $i\in S$. 
        	\item[(b)] For each fixed $\phi^{1} \in \Phi^{1}$, there exists a function $w^{\phi^{1},\alpha}$ on $S$ such that
        \begin{align}\label{discounted approaches for player 2}
        e^{\theta w^{\phi^{1},\alpha}(i)}=\inf_{\varphi \in\mathcal{P}(B(i))}\bigg[\int_{A(i)}\int_{B(i)} e^{\theta D_{2}(i,a,b)}\times \sum_{j\in S} e^{\theta \alpha w^{\phi^{1},\alpha}(j)}p_{i,j}(a,b)\varphi(db) \phi^{1}(da\vert i)\bigg],
        \end{align}
        for all $i\in S$.   
\end{itemize}
\textbf{Step 2:}
Fix an arbitrary sequence $\{\alpha_{n}\}\in (0,1)$ satisfying $\alpha_{n}\uparrow 1$, as $n\rightarrow \infty$. For each $n\geq 1$ set
\begin{align*}
&\gamma_{\alpha_{n}}^{\phi^2}=\sup_{i\in S}w^{\phi^{2},\alpha_{n}}(i),   \hspace{1cm}  \gamma_{\alpha_{n}}^{\phi^1}=\sup_{i\in S}w^{\phi^{1},\alpha_{n}}(i),\\
&\mu^{\phi^2}_{\alpha_{n}}=(1-\alpha_{n})\gamma_{\alpha_{n}}^{\phi^2}, \hspace{1cm} \mu^{\phi^1}_{\alpha_{n}}=(1-\alpha_{n})\gamma_{\alpha_{n}}^{\phi^1}, \\
&v_{\alpha_{n}}^{\phi^2}(i)= w^{\phi^{2},\alpha_{n}}(i)-\gamma_{\alpha_{n}}^{\phi^2},    \hspace{1cm}  v_{\alpha_{n}}^{2}(i)= w^{\phi^{1},\alpha_{n}}(i)-\gamma_{\alpha_{n}}^{\phi^1}. 
\end{align*} Now arguing as in Proposition 3.1 in \cite{Chen19}, it can be shown that there exists functions $y^{\phi^1}$ and $y^{\phi^2}$ and constants $\mu^{\phi^1}$ and $\mu^{\phi^2}$ such that along a subsequence $y^{\phi^m}(i)=\lim_{n\rightarrow \infty}v^{\phi^m}_{\alpha_{n}}(i)$ and $\mu^{\phi^m}=\lim_{n\rightarrow \infty}\mu^{\phi^m}_{\alpha_{n}}$, for $m=1,2$.\\
\textbf{Step 3:} First we rewrite equations \eqref{discounted approaches for player 1} and \eqref{discounted approaches for player 2} in terms of the quantities defined in Step 2. Then taking limit $n\to \infty$ and using Step 2, we obtain equations \eqref{verification player 1} and \eqref{verification player 2} respectively.\\
\textbf{Step 4:} The interpretations of $\mu^{\phi^1}$ and $\mu^{\phi^2}$ follows by similar arguments as in Theorem 4.1 of \cite{Chen19}.
\end{proof}
In order to establish the existence of a Nash equilibrium we need the following additional assumption.
\begin{assumption} Fix a state $i^*\in S$. Define $\tau^*=\inf\{n\geq 1: X_n=i^*\}$. We assume that there exist constants $R$ and $M$ such that $$\sup_{\phi^1 \in \Phi^1}\sup_{\phi^2\in \Phi^2}\sup_{i\in S}\mathbb{E}_i^{\phi^1,\phi^2}\left[R^{\tau^*}\right]\leq M.$$ For this $R$, we further assume that $\theta$ is such that $$e^{2\theta B M_{\rho}}\leq R,$$ where $M_{\rho}=\max\{M_{\rho^1},M_{\rho^2}\}$ where $M_{\rho^i}$ is as in 
\eqref{finiteness of the cost rate} with $\rho$ replaced by $\rho^i$.
\end{assumption} For sufficient conditions ensuring the first part of Assumption 3, see Proposition 3 in \cite{Ghosh18}.
Next we obtain the following theorem as a consequence of the previous theorem.
\begin{theorem}\label{individual optimality} Assume that Assumptions 1, 2 and 3 hold. Fix $(\phi^1,\phi^2)\in \Phi^1\times \Phi^2$. Then 
	there exist constants $g^{\phi^1}, g^{\phi^2}$, real valued functions $h^{\phi^1},h^{\phi^2}$ on $S$ with $h^{\phi^1}(i^*)=h^{\phi^2}(i^*)=0$, such that the following are true.\\
	(i)\begin{align} 
	e^{\theta h^{\phi^2}(i)}&=\inf_{\psi \in\mathcal{P}(A(i))}\biggl\{\int_{A(i)}\int_{B(i)}\int_{0}^{B} e^{\theta[\int_{0}^{s}\rho^1_{(i,a,b)}(t)dt-g^{\phi^2}s]}dF_{i,a,b}(s)\nonumber\\&\times \sum_{j\in S}e^{\theta h^{\phi^2}(j)}p_{i,j}(a,b)\phi^{2}(db\vert i)\psi(da)\biggr\},\qquad\forall i\in S.
	\end{align}
	 
	(ii)\begin{align}
	e^{\theta h^{\phi^1}(i)}&=\inf_{\varphi \in\mathcal{P}(B(i))}\biggl\{\int_{A(i)}\int_{B(i)}\int_{0}^{B} e^{\theta[\int_{0}^{s}\rho^2_{(i,a,b)}(t)dt-g^{\phi^1}s]}dF_{i,a,b}(s)\nonumber\\&\times \sum_{j\in S}e^{\theta h^{\phi^1}(j)}p_{i,j}(a,b)\phi^{1}(da\vert i)\varphi(db)\biggr\},\qquad\forall i\in S.
	\end{align}
	(iii) $g^{\phi^2}=\inf_{\pi^1\in \Pi_1}J_{\theta}^1(i,\pi^1,\phi^2)$ for all $i$ and $g^{\phi^1}=\inf_{\pi^2\in \Pi_2}J_{\theta}^2(i,\phi^1,\pi^2)$ for all $i$.\\
	(iv) For $(i,a,b)\in \mathbb{K}$, let $D_1^{g^{\phi^2}}(i,a,b)=\frac{1}{\theta}ln \left(\int_0^B e^{\theta[\int_0^s\rho^1_{(i,a,b)}(t)dt-g^{\phi^2}s]}dF_{i,a,b}(s)\right)$ and\\ $D_2^{g^{\phi^1}}(i,a,b)=\frac{1}{\theta}ln \left(\int_0^B e^{\theta[\int_0^s\rho^2_{(i,a,b)}(t)dt-g^{\phi^1}s]}dF_{i,a,b}(s)\right)$. Then $h^{\phi^1}$ and $h^{\phi^2}$ have the following representations.
	$$h^{\phi^2}(i)=\inf_{\phi^1 \in \Phi^1}\frac{1}{\theta}ln \mathbb{E}_i^{\phi^1,\phi^2}\left[e^{\theta\displaystyle\sum_{k=0}^{\tau^*-1} D_1^{g^{\phi^2}}(X_k,A_k,B_k)}\right],\,\,\forall i \in S\setminus \{i^*\}.$$
	$$h^{\phi^1}(i)=\inf_{\phi^2 \in \Phi^2}\frac{1}{\theta}ln \mathbb{E}_i^{\phi^1,\phi^2}\left[e^{\theta\displaystyle\sum_{k=0}^{\tau^*-1} D_2^{g^{\phi^1}}(X_k,A_k,B_k)}\right],\,\,\forall i \in S\setminus \{i^*\}.$$
\end{theorem}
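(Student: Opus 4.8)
The plan is to reduce everything to the discrete-time result of Theorem \ref{disc} by absorbing the sojourn-time contribution into a parametrised one-step cost, exactly as Lemma \ref{technical2} and Theorem \ref{existence} did in the zero-sum case. Fix $\phi^2\in\Phi^2$. For $g\in\mathbb{R}$ define the bounded continuous function $D_1^g$ on $\mathbb{K}$ by $e^{\theta D_1^g(i,a,b)}=\int_0^B e^{\theta[\int_0^s\rho^1_{(i,a,b)}(t)dt-gs]}dF_{i,a,b}(s)$, which is precisely the cost appearing in assertion (iv). Applying Theorem \ref{disc}(i) with $D_1=D_1^g$ produces, for every $g$, a function $y^{\phi^2}_g$ and a constant $\mu^{\phi^2}(g)=\inf_{\pi^1}V_{\theta,D_1^g}(i,\pi^{1},\phi^{2})$ satisfying \eqref{verification player 1}. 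Assertion (i) will follow once I produce $g^{\phi^2}$ with $\mu^{\phi^2}(g^{\phi^2})=0$: then $h^{\phi^2}:=y^{\phi^2}_{g^{\phi^2}}$, shifted by the constant $-y^{\phi^2}_{g^{\phi^2}}(i^*)$ (which leaves \eqref{verification player 1} with $\mu^{\phi^2}=0$ invariant, since both sides carry the common factor $e^{\theta c}$), solves (i) and obeys the normalisation $h^{\phi^2}(i^*)=0$.

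To locate the root $g^{\phi^2}$ I would mimic Lemma \ref{technical2} together with the argument of Theorem \ref{existence}. One checks, as in Lemma \ref{technical2}, that each $D_1^g$ is bounded and continuous (Assumption 1), that $\|D_1^g-D_1^{g_1}\|\leq B|g-g_1|$, and that $D_1^g\leq 0$ for all large $g$ while $D_1^g\geq 0$ for all sufficiently negative $g$ (since $F_{i,a,b}(0)=0$ forces $s>0$, so the term $-gs$ dominates). The monotonicity and the Lipschitz estimate for the one-player-fixed discrete value — the analogues of \eqref{monotonicity} and \eqref{estimate} for $V_{\theta,\cdot}(\,\cdot\,,\phi^2)$, which transfer to $\mu^{\phi^2}(\cdot)$ through the interpretation in Theorem \ref{disc}(i) — then show that $g\mapsto\mu^{\phi^2}(g)$ is continuous and nonincreasing, with $\mu^{\phi^2}(g)\leq 0$ for large $g$ and $\mu^{\phi^2}(g)\geq 0$ for very negative $g$. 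The intermediate value theorem yields $g^{\phi^2}$ with $\mu^{\phi^2}(g^{\phi^2})=0$, establishing (i); interchanging the roles of the players and of $(\rho^1,D_1)$ with $(\rho^2,D_2)$ gives (ii).

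For the verification statement (iii) I would repeat the pathwise induction that produced \eqref{greater than inequality} and \eqref{less than inequality} in Section 3, now one-sided: player 2 is frozen at $\phi^2$ and only player 1 varies. Starting from equation (i), whose right-hand side is an infimum over $\psi$ admitting a measurable minimising selector $\phi^{*1}\in\Phi^1$ (Assumption 1 and a standard selection theorem, the minimum over $\mathcal{P}(A(i))$ being attained at a Dirac mass), the inequality direction valid for every $\pi^1$ gives $g^{\phi^2}\leq J_\theta^1(i,\pi^1,\phi^2)$ after taking logarithms, dividing by $\theta t$ and letting $t\to\infty$, exactly as in the proof of Theorem \ref{verification theorem}; evaluating along $\phi^{*1}$ turns the inequality into an equality and yields $g^{\phi^2}\geq J_\theta^1(i,\phi^{*1},\phi^2)$. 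Taking the infimum over $\pi^1$ gives $g^{\phi^2}=\inf_{\pi^1\in\Pi_1}J_\theta^1(i,\pi^1,\phi^2)$, and symmetrically for $g^{\phi^1}$.

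The main obstacle is the first-passage representation (iv), and this is precisely where Assumption 3 is needed. With $\mu^{\phi^2}=0$, equation (i) reads $e^{\theta h^{\phi^2}(i)}=\inf_{\psi}\int_{A(i)}\int_{B(i)}e^{\theta D_1^{g^{\phi^2}}(i,a,b)}\sum_{j\in S}e^{\theta h^{\phi^2}(j)}p_{ij}(a,b)\phi^2(db|i)\psi(da)$. I would iterate this identity from a state $i\neq i^*$, stopping the unrolling at the first return time $\tau^*=\inf\{n\geq 1:X_n=i^*\}$ and using $h^{\phi^2}(i^*)=0$. For any stationary $\phi^1$ the superoptimality inequality telescopes to $e^{\theta h^{\phi^2}(i)}\leq\mathbb{E}_i^{\phi^1,\phi^2}[e^{\theta\sum_{k=0}^{\tau^*-1}D_1^{g^{\phi^2}}(X_k,A_k,B_k)}]$, while the minimising selector $\phi^{*1}$ converts this into equality. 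The delicate points are (a) that the remainder left after $n$ unrollings vanishes as $n\to\infty$ and (b) that the stopped exponential sum has finite expectation; both follow from Assumption 3, since $|g^{\phi^2}|\leq M_{\rho^1}$ (the average cost is controlled by the bounded rate, as the sojourn times covering $[0,t]$ total $t$) forces $e^{\theta D_1^{g^{\phi^2}}}\leq e^{2\theta BM_\rho}\leq R$ pointwise, whence $e^{\theta\sum_{k=0}^{\tau^*-1}D_1^{g^{\phi^2}}}\leq R^{\tau^*}$ with $\sup_{\phi^1,\phi^2,i}\mathbb{E}_i^{\phi^1,\phi^2}[R^{\tau^*}]\leq M<\infty$, and the remainder is dominated by $R^{\tau^*}1_{[\tau^*>n]}\to 0$ in expectation by dominated convergence ($R\geq 1$, and $\tau^*<\infty$ a.s.\ by Assumption 2). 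Taking the infimum over stationary $\phi^1$ and then $\frac1\theta\ln$ yields the stated representation of $h^{\phi^2}$, and symmetrically for $h^{\phi^1}$.
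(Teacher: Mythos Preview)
Your proposal is correct and follows essentially the same approach as the paper. The paper's own proof is extremely terse: it says (i)--(ii) follow from Theorem \ref{disc} ``by a similar trick as in Theorem \ref{existence}'', (iii) by arguments similar to Theorem \ref{verification theorem}, and (iv) by arguments similar to Lemma 8.1 in \cite{Chen19}; you have correctly unpacked each of these references, including the intermediate-value argument on $g\mapsto\mu^{\phi^2}(g)$ for (i)--(ii), the one-sided pathwise induction for (iii), and the iterate-and-stop-at-$\tau^*$ argument with the domination $e^{\theta D_1^{g^{\phi^2}}}\leq R$ for (iv).
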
	
\begin{proof}
The proof of (i) and (ii) follows from Theorem \ref{disc} by a similar trick as in Theorem \ref{existence} of the zero-sum game section. Proof of (iii) follows by arguments similar to Theorem \ref{verification theorem}. Finally, the proof of (iv) follows by arguments similar to Lemma 8.1 in \cite{Chen19}.
\end{proof}
Now, fix any $(\phi^{1},\phi^{2})\in \Phi^{1}\times \Phi^{2}$. Define
\begin{align*}
\Delta(\phi^{2})&=\biggl\{\phi^{*^1}\in \Phi^{1}: for \hspace{.1cm} each \hspace{.1cm} i\in S,\hspace{.1cm} \int_{A(i)}\int_{B(i)}\int_{0}^{B} e^{\theta[\int_{0}^{s}\rho^1_{(i,a,b)}(t)dt-g^{\phi^2}s]}dF_{i,a,b}(s)\\&\times \sum_{j\in S}e^{\theta h^{\phi^2}(j)}p_{i,j}(a,b)\phi^{2}(db\vert i)\phi^{*^1}(da\vert i)= \inf_{\psi \in\mathcal{P}(A(i))}\biggl\{\int_{A(i)}\int_{B(i)} \int_{0}^{B} e^{\theta[\int_{0}^{s}\rho^1_{(i,a,b)}(t)dt-g^{\phi^2}s]}dF_{i,a,b}(s)\\& \times \sum_{j\in S}e^{\theta h^{\phi^2}(j)}p_{i,j}(a,b)\phi^{2}(db\vert i)\psi(da)\biggr\}\bigg\}
\end{align*}
and 
\begin{align*}
\Delta(\phi^{1})&=\biggl\{\phi^{*^2}\in \Phi^{2}: for \hspace{.1cm} each \hspace{.1cm} i\in S,\hspace{.1cm} \int_{A(i)}\int_{B(i)}\int_{0}^{B} e^{\theta[\int_{0}^{s}\rho^2_{(i,a,b)}(t)dt-g^{\phi^1}s]}dF_{i,a,b}(s)\\&\times \sum_{j\in S}e^{\theta h^{\phi^1}(j)}p_{i,j}(a,b)\phi^{*^2}(db\vert i)\phi^{1}(da\vert i)= \inf_{\varphi \in\mathcal{P}(B(i))}\biggl\{\int_{A(i)}\int_{B(i)}\int_{0}^{B} e^{\theta[\int_{0}^{s}\rho^2_{(i,a,b)}(t)dt-g^{\phi^1}s]}dF_{i,a,b}(s)\\&\times \sum_{j\in S}e^{\theta h^{\phi^1}(j)}p_{i,j}(a,b)\varphi(db)\phi^{1}(da\vert i)\biggr\}\bigg\}.
\end{align*}
It follows from our assumptions that the sets $\Delta(\phi^{2})$ and $\Delta(\phi^{1})$ are non-empty.
\begin{lemma} Suppose that Assumptions 1 and 2 is true.
	For each $(\phi^{1},\phi^{2})\in \Phi^{1}\times \Phi^{2}$, $\Delta(\phi^{2})\times \Delta(\phi^{1})$ is convex and compact with respect to the weak topology.
\end{lemma}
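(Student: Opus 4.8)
The plan is to exploit the fact that $S$ is finite, so that a stationary policy is just a finite tuple of probability measures. Concretely, I would identify $\Phi^1$ with $\prod_{i\in S}\mathcal{P}(A(i))$ and $\Phi^2$ with $\prod_{i\in S}\mathcal{P}(B(i))$, each equipped with the product of the weak topologies. Since each $A(i)$ and $B(i)$ is compact (Assumption 1(i)), each factor $\mathcal{P}(A(i))$, $\mathcal{P}(B(i))$ is weakly compact (and metrizable), hence $\Phi^1$ and $\Phi^2$ are compact. As $\Delta(\phi^{2})\times\Delta(\phi^{1})$ sits inside the compact product space $\Phi^1\times\Phi^2$, it suffices to show that $\Delta(\phi^{2})$ and $\Delta(\phi^{1})$ are each convex and closed, and then pass to the product.

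The key preliminary step is to record that, for fixed $\phi^2$ (and the associated $g^{\phi^2},h^{\phi^2}$ from Theorem \ref{individual optimality}), the coefficient function
\begin{align*}
\Lambda_i(a):=\int_{B(i)}\Big[\int_0^B e^{\theta[\int_0^s\rho^1_{(i,a,b)}(t)\,dt-g^{\phi^2}s]}\,dF_{i,a,b}(s)\Big]\sum_{j\in S}e^{\theta h^{\phi^2}(j)}p_{ij}(a,b)\,\phi^2(db\vert i)
\end{align*}
is bounded and continuous in $a\in A(i)$. Indeed, by Assumption 1(iii)--(iv) the inner sojourn-time integral is continuous in $(a,b)$, by Assumption 1(ii) the sum $\sum_j e^{\theta h^{\phi^2}(j)}p_{ij}(a,b)$ is continuous in $(a,b)$, so their product is bounded and continuous on the compact set $A(i)\times B(i)$; integrating against the fixed kernel $\phi^2(db\vert i)$ preserves continuity in $a$. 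With this in hand, $\Delta(\phi^{2})$ is precisely the set of $\phi^{*^1}\in\Phi^1$ for which $\int_{A(i)}\Lambda_i(a)\,\phi^{*^1}(da\vert i)=\min_{a\in A(i)}\Lambda_i(a)$ for every $i$; equivalently those kernels whose $i$-section is supported on the nonempty compact set of minimisers of $\Lambda_i$.

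Convexity then follows immediately from linearity of the integral in the measure: if $\phi^{*^1}_0,\phi^{*^1}_1\in\Delta(\phi^{2})$ and $\lambda\in[0,1]$, then $\lambda\phi^{*^1}_0+(1-\lambda)\phi^{*^1}_1$ is again a kernel in $\Phi^1$ whose integral against $\Lambda_i$ equals $\lambda\min_a\Lambda_i+(1-\lambda)\min_a\Lambda_i=\min_a\Lambda_i$, so it lies in $\Delta(\phi^{2})$. Closedness follows from the continuity just established: if $\phi^{*^1}_n\to\phi^{*^1}$ in the product weak topology with each $\phi^{*^1}_n\in\Delta(\phi^{2})$, then for every $i$ the section $\phi^{*^1}_n(\cdot\vert i)$ converges weakly and, since $\Lambda_i$ is bounded continuous, $\int\Lambda_i\,d\phi^{*^1}_n(\cdot\vert i)\to\int\Lambda_i\,d\phi^{*^1}(\cdot\vert i)$; the left-hand side is constantly $\min_a\Lambda_i$, so the limit attains the minimum and $\phi^{*^1}\in\Delta(\phi^{2})$. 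A closed subset of the compact space $\Phi^1$ is compact, giving that $\Delta(\phi^{2})$ is convex and compact, and the identical argument applied to player $2$'s analogous coefficient function yields the same conclusion for $\Delta(\phi^{1})$.

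Finally, $\Delta(\phi^{2})\times\Delta(\phi^{1})$ is convex because convex combinations in a product act coordinatewise, and it is compact as a finite product of compact sets, which completes the proof. The only point genuinely requiring care is the continuity of the coefficient functions $\Lambda_i$ in the action variable; this is exactly where Assumption 1 is used in full, and it is what simultaneously guarantees attainment of the minimum and drives the closedness argument.
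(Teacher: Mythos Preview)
Your proposal is correct and follows essentially the same approach as the paper: define the coefficient function (your $\Lambda_i$, the paper's $G(i,\cdot)$), use Assumption 1 to get its boundedness and continuity, deduce convexity from linearity of the integral and closedness from weak convergence, and conclude compactness from $\Phi^1\times\Phi^2$ being compact. You are somewhat more explicit than the paper about why $\Lambda_i$ is continuous and why $\Phi^1,\Phi^2$ are compact, but the skeleton of the argument is identical.
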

\textbf{Proof.} We first show that $\Delta(\phi^{2})$ is convex. For that let $\tilde{\phi}^{1},\tilde{\psi}^{1}\in \Delta(\phi^{2})$ and $\lambda\in [0,1]$, define: $\phi^{1}_{\beta}(\cdot|i)=\lambda \tilde{\phi}^{1}(\cdot\vert i)+(1-\lambda)\tilde{\psi}^{1}(.\vert i)$ for all $i\in S$. By writing down the expression of $\phi^{1}_{\beta}$ one easily gets that $\phi^{1}_{\beta}\in \Delta(\phi^{2})$. Thus $\Delta(\phi^{2})$ is convex. By analogous argument $\Delta(\phi^{1})$ is also convex, which together implies that $\Delta(\phi^{2})\times \Delta(\phi^{1})$ is convex.\\
By the compactness of $\Phi^{1}\times \Phi^{2}$ and the fact that $\Delta(\phi^{2})\times \Delta(\phi^{1})$ is a subset of $\Phi^{1}\times \Phi^{2}$, its enough to show that $\Delta(\phi^{2})\times \Delta(\phi^{1})$ is a closed subset. First we show that $\Delta(\phi^{2})$ is a closed subset of the compact space $\Phi^{1}$. Let $\{\phi^{*^1}_{n}\}\subset \Delta(\phi^{2})$ be an arbitrary sequence converging to $\phi^{*^1}\in \Phi^{1}$, and $G(i,a):=\int_{B(i)}\int_{0}^{B} e^{\theta[\int_{0}^{s}\rho^1_{(i,a,b)}(t)dt-g^{\phi^2}s]}dF_{i,a,b}(s)\times \sum_{j\in S}e^{\theta h^{\phi^2}(j)}p_{i,j}(a,b)\phi^{2}(db\vert i)$ for $i\in S$ and $a\in A(i)$. By Assumption 1, we have that for each $i\in S$, $G(i,.)$ is a bounded continuous function on $A(i)$. Thus by definition of weak topology we obtain 
\begin{align*}
\int_{A(i)}G(i,a)\phi^{*^1}_{n}(da\vert i)\rightarrow \int_{A(i)}G(i,a)\phi^{*^1}(da\vert i).
\end{align*}
as $n\rightarrow \infty$. Since $\{\phi^{*^1}_{n}\}\subset \Delta(\phi^{2})$  
\begin{align*}
\int_{A(i)}G(i,a)\phi^{*^1}_{n}(da\vert i)= \inf_{\mu \in \mathcal{P}(A(i))}\biggl\{\int_{A(i)}G(i,a)\mu(da)\biggr\}
\end{align*}
for all $n=1,2,...$. Hence, we have $\phi^{*^1}\in \Delta(\phi^{2})$. Thus, $\Delta(\phi^{2})$ is closed. Similarly, $\Delta(\phi^{1})$ is closed. So combining we get $\Delta(\phi^{2})\times \Delta(\phi^{1})$ is convex and compact.
\begin{lemma}\label{continuity} Suppose that Assumptions 1,2 and 3 hold. For each $i\in S$, the functions $\phi^1\to h^{\phi^1}(i)$ and $\phi^2\to h^{\phi^2}(i)$ are continuous in $\phi^1\in \Phi^1$ and $\phi^2\in \Phi^2$ respectively. Continuity also holds for the functions $\phi^1\to g^{\phi^1}$ and $\phi^2\to g^{\phi^2}$.
\end{lemma}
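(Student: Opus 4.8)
The plan is to prove the two assertions for player~$2$, namely continuity of $\phi^2\mapsto g^{\phi^2}$ and of $\phi^2\mapsto h^{\phi^2}(i)$; the statements for player~$1$ follow by interchanging the roles of the players. The strategy is to obtain both at once from the optimality equation (i) of Theorem~\ref{individual optimality} by a compactness argument, treating $g^{\phi^2}$ and $h^{\phi^2}$ as the value and normalized relative value of an auxiliary discrete-time problem. Concretely, for fixed $g$ write $\mu^{\phi^2}(g):=\inf_{\pi^1}V_{\theta,D_1^{g}}(i,\pi^1,\phi^2)$ for the discrete-time value of Theorem~\ref{disc} associated with the cost $D_1^{g}$ of Theorem~\ref{individual optimality}(iv) and the fixed opponent $\phi^2$. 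Comparing (i) with \eqref{verification player 1}, a pair $(g,h)$ with $h(i^*)=0$ solves (i) if and only if $\mu^{\phi^2}(g)=0$ and $h$ is the corresponding relative value (this uses that, under the irreducibility Assumption~2, the constant for which \eqref{verification player 1} admits a solution is unique and equals $\mu^{\phi^2}(g)$). Two facts, uniform in $\phi^2$, will be needed. First, since $F_{i,a,b}$ has no atom at $0$ by \eqref{no immediate jump}, the cost $D_1^{g}$ is \emph{strictly} decreasing in $g$ with a positive gap on the admissible set, so by the monotonicity \eqref{monotonicity} the map $g\mapsto\mu^{\phi^2}(g)$ is strictly decreasing; hence $\mu^{\phi^2}(g)=0$ has a unique root, which is exactly $g^{\phi^2}$. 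Second, by the player-$1$ analogue of Lemma~\ref{technical2}(iii)--(iv) this root lies in a fixed interval $[g^{+},g^{-}]$ independent of $\phi^2$ (indeed $|g^{\phi^2}|\le M_\rho$ by \eqref{finiteness of the cost rate}).

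Next I would establish that the functions $h^{\phi^2}$ are uniformly bounded in $\phi^2$, which is the point at which Assumption~3 enters. From \eqref{finiteness of the cost rate} and $|g^{\phi^2}|\le M_\rho$ one has $\|D_1^{g^{\phi^2}}\|\le 2BM_\rho$, so that $R^{-\tau^*}\le e^{\theta\sum_{k=0}^{\tau^*-1}D_1^{g^{\phi^2}}(X_k,A_k,B_k)}\le R^{\tau^*}$ because $e^{2\theta BM_\rho}\le R$. Plugging this into the representation in Theorem~\ref{individual optimality}(iv) and using $\mathbb{E}_i^{\phi^1,\phi^2}[R^{\tau^*}]\le M$ gives $h^{\phi^2}(i)\le\frac1\theta\ln M$; the convexity bound $R^{x}\ge 1+x\ln R$ also yields a uniform bound on $\mathbb{E}_i^{\phi^1,\phi^2}[\tau^*]$ and hence, via Jensen's inequality, a uniform lower bound for $h^{\phi^2}(i)$. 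Thus $\sup_{\phi^2}\|h^{\phi^2}\|<\infty$.

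I would then run the compactness argument. Let $\phi^2_n\to\phi^2$ weakly. Since $\{g^{\phi^2_n}\}\subset[g^{+},g^{-}]$ and $\{h^{\phi^2_n}\}$ are bounded and $S$ is finite, there is a subsequence along which $g^{\phi^2_{n_k}}\to\bar g$ and $h^{\phi^2_{n_k}}\to\bar h$ coordinatewise, with $\bar h(i^*)=0$. Passing to the limit in (i): by the player-$1$ analogue of Lemma~\ref{technical2}(ii), $\|D_1^{g^{\phi^2_{n_k}}}-D_1^{\bar g}\|\le B|g^{\phi^2_{n_k}}-\bar g|\to0$, while the integrand $(a,b)\mapsto e^{\theta D_1^{\bar g}(i,a,b)}\sum_{j}e^{\theta\bar h(j)}p_{ij}(a,b)$ is bounded and jointly continuous by Assumption~1; hence the inner integrals against $\phi^2_{n_k}(db\mid i)$ converge uniformly in $a\in A(i)$ (equicontinuity on the compact $A(i)\times B(i)$ together with weak convergence), and since the infimum over $\mathcal{P}(A(i))$ is attained at Dirac measures, the infima converge. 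Therefore $(\bar g,\bar h)$ solves equation (i) for the limit strategy $\phi^2$ with $\bar h(i^*)=0$. By the uniqueness recorded above, $\bar g=g^{\phi^2}$ (eigenvalue uniqueness plus strict monotonicity), and $\bar h=h^{\phi^2}$ (uniqueness of the normalized relative value under Assumption~2, equivalently the representation in Theorem~\ref{individual optimality}(iv)). As every convergent subsequence has the same limit, the full sequences converge: $g^{\phi^2_n}\to g^{\phi^2}$ and $h^{\phi^2_n}(i)\to h^{\phi^2}(i)$ for each $i$. The continuity of $\phi^1\mapsto g^{\phi^1},h^{\phi^1}(i)$ is proved analogously using equation (ii).

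The step I expect to be the main obstacle is the passage to the limit in the optimality equation (i): one must carry weak convergence of $\phi^2_n$ through the exponential nonlinearity and the infimum over $\mathcal{P}(A(i))$, and this is only legitimate because Assumption~3 delivers the uniform bound on $\{h^{\phi^2}\}$ needed to extract a convergent subsequence, and because Assumption~2 guarantees that the eigenvalue--eigenfunction pair solving the risk-sensitive average-cost equation is unique up to the normalization $h(i^*)=0$.
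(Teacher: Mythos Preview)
Your proof is correct and follows essentially the same approach as the paper: obtain uniform bounds on $g^{\phi^2}$ and $h^{\phi^2}$ via Assumption~3 and the representation in Theorem~\ref{individual optimality}(iv), extract a convergent subsequence, pass to the limit in the optimality equation~(i), identify the limit by uniqueness, and conclude via the subsequence principle. The only minor differences are technical: the paper invokes an extended Fatou lemma for the limit passage where you argue via equicontinuity and uniform convergence, and the paper identifies the limiting $g^*$ by re-running the verification argument of Theorem~\ref{individual optimality}(iii) rather than through your strict-monotonicity-of-$\mu^{\phi^2}(g)$ route; both identifications of $h$ ultimately rest on the representation~(iv).
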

\begin{proof}
By (iii) of Theorem \ref{individual optimality}, we have $|g^{\phi^1}|\leq M_{\rho}$ and $|g^{\phi^1}|\leq M_{\rho}$. We also have $||D^{\phi^2}_1||\leq 2BM_{\rho}$ and $||D^{\phi^1}_2||\leq 2BM_{\rho}$. Thus by Assumption 3, we have $h^{\phi^m}(i)\leq \frac{1}{\theta}ln M$ for $m=1,2$ and for all $i\in S$. Now Assumption 3 also implies that $\sup_{\pi^1\in \Phi^1}\sup_{\phi^2\in \Phi^2}\sup_{i\in S}\mathbb{E}_i^{\phi^1,\phi^2}\tau^*\leq K$, for some $K$. So by Jensen's inequality we have $h^{\phi^m}(i)\geq -2KBM_{\rho}$ for $m=1,2$ and for all $i\in S$. Now suppose $\phi^2_n\to \phi^2$. Let us consider subsequences $\{g^{\phi^2_{n_k}}\}$, $\{h^{\phi^2_{n_k}}(i)\}$ . We will get a further subsequence such that $g^{\phi^2_{n_l}}\to g^*$ for some constant $g^*$ and $h^{\phi^2_{n_l}}(j)\to u(j)$ for all $j\in S$ for some function $u$ on $S$. We have,
\begin{align} 
	e^{\theta h^{\phi^2_{n_l}}(i)}&=\inf_{\psi \in\mathcal{P}(A(i))}\biggl\{\int_{A(i)}\int_{B(i)}\int_{0}^{B} e^{\theta[\int_{0}^{s}\rho^1_{(i,a,b)}(t)dt-g^{\phi^2_{n_l}}s]}dF_{i,a,b}(s)\nonumber\\&\times \sum_{j\in S}e^{\theta h^{\phi^2_{n_l}}(j)}p_{i,j}(a,b)\phi^{2}_{n_l}(db\vert i)\psi(da)\biggr\}\qquad\forall i\in S.
	\end{align}
	Now by our assumptions, definition of weak convergence and extended Fatou's lemma (Lemma 8.3.7 in \cite{Lasserre99}), we obtain by taking limit $l\to \infty$ in the above equation,
	\begin{align} 
	e^{\theta u(i)}&=\inf_{\psi \in\mathcal{P}(A(i))}\biggl\{\int_{A(i)}\int_{B(i)}\int_{0}^{B} e^{\theta[\int_{0}^{s}\rho^1_{(i,a,b)}(t)dt-g^*s]}dF_{i,a,b}(s)\nonumber\\&\times \sum_{j\in S}e^{\theta u(j)}p_{i,j}(a,b)\phi^{2}(db\vert i)\psi(da)\biggr\}\qquad\forall i\in S.
	\end{align} Thus again arguing as in Theorem \ref{individual optimality}, we will get that $g^*=\inf_{\pi^1 \in \Pi_1}J_{\theta}^1(i,\pi^1,\phi^2)=g^{\phi^2}$ and $u(i)=\inf_{\phi^1 \in \Phi^1}\frac{1}{\theta}ln \mathbb{E}_i^{\phi^1,\phi^2}\left[e^{\theta\displaystyle\sum_{k=0}^{\tau^*-1} D_1^{g^{\phi^2}}(X_k,A_k,B_k)}\right]=h^{\phi^2}(i)\,\,\forall i \in S\setminus \{i^*\}$. Since every subsequence has a further subsequence which converges to the same limit, we are done.
\end{proof}
Now we state the main theorem of this section.
\begin{theorem} Suppose that Assumptions 1,2 and 3 hold.
	There exists constants $g^{*^1}, g^{*^2}$, real valued functions $y^{*^1},y^{*^2}$ on S and a pair of stationary policies  $(\phi^{*^1},\phi^{*^{2}})\in \Phi^{1}\times \Phi^{2}$ such that
	{\footnotesize\begin{align}\label{couple1}
	e^{\theta y^{*^1}(i)}&=\inf_{\psi \in\mathcal{P}(A(i))}\biggl\{\int_{A(i)}\int_{B(i)}\int_{0}^{B} e^{\theta[\int_{0}^{s}\rho^1_{(i,a,b)}(t)dt-g^{*^1}s]}dF_{i,a,b}(s)\times \sum_{j\in S}e^{\theta y^{*^1}(j)}p_{i,j}(a,b)\phi^{*^{2}}(db\vert i)\psi(da)\biggr\}\nonumber\\
	&=\int_{A(i)}\int_{B(i)}\int_{0}^{B} e^{\theta[\int_{0}^{s}\rho^1_{(i,a,b)}(t)dt-g^{*^1}s]}dF_{i,a,b}(s)\times \sum_{j\in S}e^{\theta y^{*^1}(j)}p_{i,j}(a,b)\phi^{*^{2}}(db\vert i)\phi^{*^1}(da\vert i),
	\end{align}}
	and 
	{\footnotesize\begin{align}\label{couple2}
	e^{\theta y^{*^2}(i)}&=\inf_{\varphi \in\mathcal{P}(B(i))}\biggl\{\int_{A(i)}\int_{B(i)}\int_{0}^{B} e^{\theta[\int_{0}^{s}\rho^2_{(i,a,b)}(t)dt-g^{*^2}s]}dF_{i,a,b}(s)\times \sum_{j\in S}e^{\theta y^{*^2}(j)}p_{i,j}(a,b)\varphi(db)\phi^{*^1}(da\vert i)\biggr\}\nonumber\\
	&=\int_{A(i)}\int_{B(i)}\int_{0}^{B} e^{\theta[\int_{0}^{s}\rho_{(i,a,b)}(t)dt-g^{*^2}s]}dF_{i,a,b}(s)\times \sum_{j\in S}e^{\theta y^{*^2}(j)}p_{i,j}(a,b)\phi^{*^2}(db\vert i)\phi^{*^1}(da\vert i),
	\end{align}}
	for all $i\in S$. Moreover, the pair of policies $(\phi^{*^1},\phi^{*^{2}})\in \Phi^{1}\times \Phi^{2}$ is a Nash-equilibrium and we have $J_{m}(i,\phi^{*^1},\phi^{*^{2}})=g^{*^m}$ for all $i\in S$ and $m=1,2$.
\end{theorem}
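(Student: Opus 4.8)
The plan is to exhibit a Nash equilibrium as a fixed point of the joint best-response correspondence and invoke the Fan--Glicksberg fixed point theorem. Define the set-valued map
$$\Psi:\Phi^1\times\Phi^2\to 2^{\Phi^1\times\Phi^2},\qquad \Psi(\phi^1,\phi^2)=\Delta(\phi^2)\times\Delta(\phi^1).$$
A fixed point $(\phi^{*^1},\phi^{*^2})\in\Psi(\phi^{*^1},\phi^{*^2})$ is precisely a pair with $\phi^{*^1}\in\Delta(\phi^{*^2})$ and $\phi^{*^2}\in\Delta(\phi^{*^1})$. Setting $g^{*^1}=g^{\phi^{*^2}}$, $y^{*^1}=h^{\phi^{*^2}}$, $g^{*^2}=g^{\phi^{*^1}}$, $y^{*^2}=h^{\phi^{*^1}}$, the first equality in each of \eqref{couple1}--\eqref{couple2} is exactly the optimality equation of Theorem \ref{individual optimality}(i)--(ii) for the pair $(\phi^{*^1},\phi^{*^2})$, while the second equality is the attainment encoded in the memberships $\phi^{*^1}\in\Delta(\phi^{*^2})$, $\phi^{*^2}\in\Delta(\phi^{*^1})$. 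So producing the fixed point immediately yields the coupled system.

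To apply Fan--Glicksberg I would first check its hypotheses on the domain $\Phi^1\times\Phi^2$. Being a finite product, over the finite set $S$, of the weakly compact convex sets $\mathcal{P}(A(i))$ and $\mathcal{P}(B(i))$, it is a nonempty compact convex subset of a locally convex Hausdorff topological vector space. For each $(\phi^1,\phi^2)$ the image $\Psi(\phi^1,\phi^2)=\Delta(\phi^2)\times\Delta(\phi^1)$ is nonempty by the measurable-selection remark preceding the convexity lemma, and is convex and compact by that lemma itself.

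The crux is upper hemicontinuity (closed graph) of $\Psi$. Let $(\phi^1_n,\phi^2_n)\to(\phi^1,\phi^2)$ and $(\tilde\phi^1_n,\tilde\phi^2_n)\in\Psi(\phi^1_n,\phi^2_n)$ with $(\tilde\phi^1_n,\tilde\phi^2_n)\to(\tilde\phi^1,\tilde\phi^2)$; I must show $\tilde\phi^1\in\Delta(\phi^2)$ and $\tilde\phi^2\in\Delta(\phi^1)$. Treating the first coordinate, the membership $\tilde\phi^1_n\in\Delta(\phi^2_n)$ is the equality of $\int_{A(i)}\int_{B(i)}G_n(i,a,b)\,\phi^2_n(db|i)\tilde\phi^1_n(da|i)$ with $\inf_{\psi}\int_{A(i)}\int_{B(i)}G_n(i,a,b)\,\phi^2_n(db|i)\psi(da)$, where $G_n(i,a,b)=\int_0^B e^{\theta[\int_0^s\rho^1_{(i,a,b)}(t)\,dt-g^{\phi^2_n}s]}dF_{i,a,b}(s)\sum_{j\in S}e^{\theta h^{\phi^2_n}(j)}p_{ij}(a,b)$. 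By Lemma \ref{continuity}, $g^{\phi^2_n}\to g^{\phi^2}$ and $h^{\phi^2_n}(j)\to h^{\phi^2}(j)$ for the finitely many $j\in S$, whence Assumption 1 (equicontinuity and uniform boundedness of the family) gives $G_n\to G$ uniformly on $\mathbb{K}$. Uniform convergence of the integrands combined with the weak convergence of the kernels $\phi^2_n\otimes\tilde\phi^1_n\to\phi^2\otimes\tilde\phi^1$ lets me pass to the limit on the left, and the same uniform-convergence argument (now uniform in $\psi$ via equicontinuity of the inner integrals) gives convergence of the infima on the right; hence $\tilde\phi^1\in\Delta(\phi^2)$, and the second coordinate is symmetric. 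I expect this interchange of uniform convergence of integrands with weak convergence of the kernels to be the main technical obstacle, precisely because the integrand carries the parameters $g^{\phi^2_n},h^{\phi^2_n}$, and it is exactly Lemma \ref{continuity} that makes the passage legitimate.

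Fan--Glicksberg now furnishes the fixed point, hence \eqref{couple1}--\eqref{couple2}. It remains to identify it as a Nash equilibrium. Since $\phi^{*^1}\in\Delta(\phi^{*^2})$, the selector $\phi^{*^1}$ attains the infimum in the player-1 optimality equation with parameters $(g^{\phi^{*^2}},h^{\phi^{*^2}})$; running the induction and verification argument used in the proof of Theorem \ref{verification theorem}, now as an equality since player 2 is held at $\phi^{*^2}$ and only the cost $\mathcal{C}^1_t$ is involved, yields $J_\theta^1(i,\phi^{*^1},\phi^{*^2})=g^{\phi^{*^2}}=g^{*^1}$. By Theorem \ref{individual optimality}(iii), $g^{\phi^{*^2}}=\inf_{\pi^1}J_\theta^1(i,\pi^1,\phi^{*^2})$, so $J_\theta^1(i,\phi^{*^1},\phi^{*^2})\le J_\theta^1(i,\pi^1,\phi^{*^2})$ for every $\pi^1$, which is the first Nash inequality; the inequality for player 2 follows symmetrically from $\phi^{*^2}\in\Delta(\phi^{*^1})$. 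The displayed identities $J_m(i,\phi^{*^1},\phi^{*^2})=g^{*^m}$ are exactly what these verification steps established, completing the proof.
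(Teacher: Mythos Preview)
Your proposal is correct and follows essentially the same route as the paper: define the best-response correspondence $\Psi(\phi^1,\phi^2)=\Delta(\phi^2)\times\Delta(\phi^1)$, verify via Lemma~\ref{continuity} that $\Psi$ has closed graph, apply Fan's fixed point theorem, and then read off \eqref{couple1}--\eqref{couple2} and the Nash property from Theorem~\ref{individual optimality} together with the verification argument of Theorem~\ref{verification theorem}. The only cosmetic difference is that where you pass to the limit via uniform convergence of the integrands $G_n\to G$ combined with weak convergence of the product kernels, the paper invokes the extended Fatou lemma (Lemma~8.3.7 in \cite{Lasserre99}); both devices are adequate here since $S$ is finite and the scalar parameters $g^{\phi^2_n},h^{\phi^2_n}(j)$ converge.
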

\begin{proof} Let $2^{\Phi^{1}\times \Phi^{2}}$ be the power set of $\Phi^{1}\times \Phi^{2}$ and define the multi function $\Psi:\Phi^{1}\times \Phi^{2} \rightarrow 2^{\Phi^{1}\times \Phi^{2}}$ by $\Psi((\phi^{1}, \phi^{2}))=\Delta(\phi^{2})\times \Delta(\phi^{1})$. Next we show that $\Psi$ has a closed graph. Let $\{(\phi_{n}^{1},\phi_{n}^{2})\}\subset \Phi^{1}\times \Phi^{2}$ and $\{(\phi_{n}^{*^1},\phi_{n}^{*^2})\}\subset \Phi^{1}\times \Phi^{2}$ be arbitrary sequences with $\{(\phi_{n}^{*^1},\phi_{n}^{*^2})\}\in  \Psi((\phi^{1}_{n},\phi_{n}^{2}))$ and $\{(\phi_{n}^{1},\phi_{n}^{2})\}$ and $\{(\phi_{n}^{*^1},\phi_{n}^{*^2})\}$ converges to $(\bar{\phi}^{1},\bar{\phi}^{2})$ and $(\bar{\phi}^{*^1},\bar{\phi}^{*^2})$, respectively. Then by the definition of $\Delta(\phi^{2}_{n})$, we have 
\begin{align}\label{definition of delta 2}
&\inf_{\psi \in\mathcal{P}(A(i))}\biggl\{\int_{A(i)}\int_{B(i)}\int_{0}^{B} e^{\theta[\int_{0}^{s}\rho^1_{(i,a,b)}(t)dt-g^{\phi_n^2}s]}dF_{i,a,b}(s)\times \sum_{j\in S}e^{\theta h^{\phi^2_n}(j)}p_{i,j}(a,b)\phi^{2}_{n}(db\vert i)\psi(da)\biggr\}\nonumber\\
&=\int_{A(i)}\int_{B(i)}\int_{0}^{B} e^{\theta[\int_{0}^{s}\rho^1_{(i,a,b)}(t)dt-g^{\phi^2_n}s]}dF_{i,a,b}(s)\times \sum_{j\in S}e^{\theta h^{\phi^2_n}(j)}p_{i,j}(a,b)\phi^{2}_{n}(db\vert i)\phi^{*^1}_{n}(da\vert i).
\end{align}
Now using our assumptions, Lemma \ref{continuity} and extended Fatou's lemma (Lemma 8.3.7 in \cite{Lasserre99}) we obtain by taking limit $n\to \infty$ in \eqref{definition of delta 2},
\begin{align*}
&\inf_{\psi \in\mathcal{P}(A(i))}\biggl\{\int_{A(i)}\int_{B(i)}\int_{0}^{B} e^{\theta[\int_{0}^{s}\rho^1_{(i,a,b)}(t)dt-g^{\bar{\phi}^2}s]}dF_{i,a,b}(s)\times \sum_{j\in S}e^{\theta h^{\bar{\phi}^2}(j)}p_{i,j}(a,b)\bar{\phi}^{2}(db\vert i)\psi(da)\biggr\}\nonumber\\
&=\int_{A(i)}\int_{B(i)}\int_{0}^{B} e^{\theta[\int_{0}^{s}\rho^1_{(i,a,b)}(t)dt-g^{\bar{\phi}^2}s]}dF_{i,a,b}(s)\times \sum_{j\in S}e^{\theta h^{\bar{\phi}^2}(j)}p_{i,j}(a,b)\bar{\phi}^{2}(db\vert i)\bar{\phi}^{*^1}(da\vert i).
\end{align*}
for all $i\in S$, which implies $\bar{\phi}^{*^1}\in \Delta(\bar{\phi}^{2})$. Using similar arguments as above, we can also show that $\bar{\phi}^{*^2}\in \Delta(\bar{\phi}^{1})$. Hence, the multi function $\Psi$ has a closed graph. Therefore by Fan's fixed point theorem \cite{Fan52} we have the existence of $(\phi^{*^1},\phi^{*^2})\in \Phi^{1}\times \Phi^{2}$ such that $((\phi^{*^1},\phi^{*^2}))\in \Delta(\phi^{*^2})\times \Delta(\phi^{*^1})$. Now using Theorem \ref{individual optimality} we obtain solution to the coupled system of equations \eqref{couple1} and \eqref{couple2}.

Now for the Nash equilibrium part, it follows from \eqref{couple1} and arguments similar to Theorem \ref{verification theorem}, that $$g^{*^1}=J_1(i,\phi^{*^1},\phi^{*^2})=V_{\theta}^{1}(\phi^{*^2}).$$ Analogously, starting from \eqref{couple2} it can be shown that $$g^{*^2}=J_2(i,\phi^{*^1},\phi^{*^2})=V_{\theta}^{2}(\phi^{*^1}).$$ Hence we are done.
\end{proof}
\section{Conclusion} In this paper we have studied both zero-sum and non-zero sum risk-sensitive average criterion games for semi-Markov process. Here we assume that the state space is finite and the sojourn time distributions are supported within a fixed compact interval. So it remains an open problem to extend the setting to more general state space and sojourn time distributions. Note that such a problem is also open for the control case as well, because in \cite{Cadena16} where the control problem is studied similar assumptions are made and crucially used in the analysis.

\textbf{Acknowledgement:} The research of the second named author is supported by the Mathematical Research Impact Centric Support (MATRICS) grant, File No: MTR/2020/000350, by the Science and Engineering Research Board (SERB), Department of Science and Technology (DST), Government of India.

\bibliographystyle{plain}
	\bibliography{bib}
	
\end{document}